\newtheorem{theorem}{Theorem}
\newtheorem{corollary}{Corollary}
\newtheorem{lemma}{Lemma}
\newtheorem{problem}{Problem}
\newtheorem{proposition}{Proposition}
\newtheorem{claim}{Claim}
\newcommand{\BF}[1]{{\bf\boldmath{#1}\unboldmath}}
\newcommand{\NIn}{N_{\mathrm{in}}}
\newcommand{\IG}{\mathrm{IG}}
\newcommand{\functions}{\mathrm{F}}
\newcommand{\GF}{\mathrm{GF}}
\newcommand{\ima}{\mathrm{ima}}
\newcommand{\per}{\mathrm{per}}
\newcommand{\Ima}{\mathrm{Ima}}
\newcommand{\Per}{\mathrm{Per}}
\begin{document}
\title{Maximum Rank and Periodic Rank of Finite Dynamical Systems}
\author{Maximilien Gadouleau\footnote{School of Engineering and Computing Sciences, Durham University, Durham, UK. Email: \texttt{m.r.gadouleau@durham.ac.uk}}}
%\classno{05C38 (primary), 05C90, 37N25, 06E30}

\maketitle

\begin{abstract}
A finite dynamical system is a system of multivariate functions over a finite alphabet used to model a network of interacting entities. The main feature of a finite dynamical system is its interaction graph, which indicates which local functions depend on which variables; the interaction graph is a qualitative representation of the interactions amongst entities on the network. The rank of a finite dynamical system is the cardinality of its image; the periodic rank is the number of its periodic points. In this paper, we determine the maximum rank and the maximum periodic rank of a finite dynamical system with a given interaction graph over any non-Boolean alphabet. We also obtain a similar result for Boolean finite dynamical systems (also known as Boolean networks) whose interaction graphs are contained in a given digraph. We then prove that the average rank is relatively close (as the size of the alphabet is large) to the maximum. The results mentioned above only deal with the parallel update schedule. We finally determine the maximum rank over all block-sequential update schedules and the supremum periodic rank over all complete update schedules.
\end{abstract}

\section{Introduction}

\BF{Finite Dynamical Systems} (FDSs) have been used to represent networks of interacting entities as follows. A network of $n$ entities has a state $x = (x_1,\dots, x_n) \in [q]^n$, represented by a $q$-ary variable $x_v \in [q] = \{0,1,\dots,q-1\}$ on each entity $v$, which evolves according to a deterministic function $f = (f_1,\dots,f_n) : [q]^n \to [q]^n$, where $f_v : [q]^n \to [q]$ represents the update of the local state $x_v$. FDSs have been used to model gene networks (see \cite{KS08,TD90}), neural networks \cite{ADG04,Hop82}, network coding \cite{Rii07}, social interactions \cite{GT83,PS83} and more (see \cite{GM90}). 

The architecture of an FDS $f: [q]^n \to [q]^n$ can be represented via its \BF{interaction graph} $\IG(f)$, which indicates which update functions depend on which variables. More formally, $\IG(f)$ has $\{1,\dots,n\}$ as vertex set and there is an arc from $u$ to $v$ if $f_v(x)$ depends on $x_u$. In different contexts, the interaction graph is known--or at least well approximated--, while the actual update functions are not. One main problem of research on FDSs is then to predict their dynamics according to their interaction graphs. However, due to the wide variety of possible local functions, determining properties of an FDS given its interaction graph is in general a difficult problem.

For instance, maximising the number of fixed points of an FDS based on its interaction graph was the subject of a lot of work, e.g. in \cite{Ara08,ADG04,GRR14,Ric09,Rii07}. The logarithm of the number of fixed points is notably upper bounded by the transversal number of its interaction graph \cite{ADG04,Rii07}. This upper bound is reached for large classes of graphs (e.g. perfect graphs) but is not tight in general \cite{Rii07}. Moreover, there is a dramatic change whether we assume that the FDS has an interaction graph equal to a certain digraph or only contained in that digraph (this is the distinction between guessing number and strict guessing number in \cite{GRF15}). 

In this paper, we are interested in maximising two other very important dynamical parameters of an FDS given its interaction graph. First, the \BF{rank} of an FDS $f$ is the number of images of $f$. In particular, determining the maximum rank also determines whether there exists a bijective FDS with a given interaction graph. This is equivalent to the existence of so-called reversible dynamics, where the whole history of the system can be traced back in time. Second, because there are only a finite number of states, all the asymptotic points of $f$ are periodic. The number of periodic points of $f$ is referred to as its \BF{periodic rank}. In contrast with the situation for fixed points, we derive a bound on these two quantities which is attained for all interaction graphs and all alphabets. In particular, there exists a bijection with interaction graph $D$ if and only if all the vertices of $D$ can be covered by disjoint cycles. Moreover, we prove that our bound is attained for functions whose interaction graph is equal to a given digraph, and not only contained, for all non-Boolean alphabets. We then show that the average rank is relatively close (as $D$ is fixed and $q$ tends to infinity) to the maximum.

These results can be viewed as the discrete analogue to Poljak's matrix theorem in \cite{Pol89}, which finds the maximum rank of $M^p$, where $M$ is a real matrix with given support and $p \ge 1$. However, our results extend Poljak's result in three ways. Firstly, they hold for all functions, not only linear functions. Secondly, they explicitly determine the maximum periodic rank. Thirdly, the average rank of a real matrix cannot be properly defined, hence our result on the average rank of finite dynamical systems is completely novel.

The results mentioned above hold for the so-called parallel update schedule, where all entities update their local state at the same time, and hence $x$ becomes $f(x)$. We then study complete update schedules, where all entities update their local state at least once, and block-sequential schedules where all entities update their local state exactly once (the parallel schedule being a very particular example of block-sequential schedule). We then prove that the upper bound on the rank in parallel remains valid for any block-sequential schedule but is no longer valid for all complete schedules. We also determine the maximum periodic rank when considering all possible complete schedules. In particular, there exists a bijection with interaction graph $D$ if and only if all the vertices of $D$ belong to a cycle.

The rest of the paper is organised as follows. Section \ref{sec:max_rank} introduces some useful notation and describes our results on the maximum (periodic) rank in parallel. Section \ref{sec:average} then proves our result on the average rank. Finally, the maximum rank and periodic rank under different update schedules are investigated in Section \ref{sec:update}.

\section{Maximum (periodic) rank in parallel} \label{sec:max_rank}

\subsection{Background and notation}

Let $D = (V,E)$ be a digraph on $n$ vertices; let $V = \{1, \dots, n\}$ be its set of vertices and $E \subseteq V^2$ its set of arcs. The digraph may have loops, but no repeated arcs. The adjacency matrix $M \in \{0,1\}^{n \times n}$ has entries $m_{u,v} = 1$ if and only if $(u,v) \in E$. We denote the in-neighbourhood of a vertex $v$ in $D$ by
$$
	\NIn(v;D) = \{u \in V : (u,v) \in E\}.
$$
When there is no confusion, we shall omit the dependence on $D$. This is extended to sets of vertices: $\NIn(S) = \bigcup_{v \in S} \NIn(v)$. The out-neighbourhood is defined similarly. A source is a vertex with empty in-neighbourhood; a sink is a vertex with empty out-neighbourhood. The in-degree of $v$ is the cardinality of its in-neighbourhood and is denoted by $d_v$.

A \BF{walk} $w = (v_0,\ldots,v_p)$ is a sequence of (not necessarily distinct) vertices such that $(v_s, v_{s+1}) \in E$ for all $0 \le s \le p-1$. A path is a walk where all vertices are distinct. A cycle is a walk where only the first and last vertices are equal. We refer to $p$ as the length of the walk; a $p$-walk is a walk of length $p$. We say that two $p$-walks $w = (w_0,\ldots,w_p), w' = (w'_0,\ldots,w'_p)$ are \BF{independent} if $w_s \ne w'_s$ for all $0 \le s \le p$. We denote the maximum number of pairwise independent $p$-walks as $\alpha_p(D)$.

Edmonds gave a formula for $\alpha_1(D)$ in \cite{Edm67}, based on the K\"onig-Ore formula:
$$
	\alpha_1(D) = n - \max\{|S| - |\NIn(S)| : S \subseteq V\}.
$$ 
This was greatly generalised by Poljak, who showed that $\alpha_p(D)$ could be computed in polynomial time and who gave a formula for $\alpha_p(D)$ for all $p \ge 1$ in \cite{Pol89}. Suppose that $C_1, \dots, C_r$ and $P_1, \dots, P_s$ are vertex-disjoint cycles and paths. The cycle $C_i = (c_0, \dots, c_{l-1})$ produces $l$ independent $p$-walks of the form $W_a = (c_a, c_{a+1}, \dots, c_{a+p-1})$, where indices are computed mod $l$ and $0 \le a \le l-1$. The path $P_j = (d_0, \dots, d_{m-1})$ produces $m-p$ independent $p$-walks of the form $W_b = (d_b, d_{b+1}, \dots, d_{b+p-1})$, where $0 \le d \le m-p-1$. Poljak's theorem asserts that this is the optimal way of producing pairwise independent $p$-walks. We denote the number of vertices of a cycle $C$ and of a path $P$ as $|C|$ and $|P|$, respectively.

\begin{theorem}[\cite{Pol89}]
For every digraph $D$ and a positive integer $p$, 
$$
	\alpha_p(D) = \max \left\{ \sum_{i=1}^r |C_i| + \sum_{j=1}^s (|P_j| - p) \right\},
$$
where the maximum is taken over all families of pairwise vertex-disjoint cycles and paths $C_1, \dots, C_r$ and $P_1, \dots, P_s$.
\end{theorem}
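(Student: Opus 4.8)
The plan is to prove the two inequalities $\alpha_p(D)\ge\nu_p(D)$ and $\alpha_p(D)\le\nu_p(D)$ separately, where I write $\nu_p(D)$ for the right-hand side of the statement (the maximum of $\sum_i|C_i|+\sum_j(|P_j|-p)$ over vertex-disjoint families of cycles and paths). The first inequality is the easy \emph{achievability} direction and is essentially the construction already sketched in the excerpt; the second is the \emph{optimality} direction and is the heart of the theorem. I would present the whole identity as a min--max (K\"onig-type) statement and prove it by linear-programming duality, exhibiting matching integral optima.

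For the lower bound I fix an optimal vertex-disjoint family $C_1,\dots,C_r,P_1,\dots,P_s$ and check that the walks it generates are pairwise independent. Two walks arising from distinct components never collide, since the components are vertex-disjoint; two rotations $W_a,W_{a'}$ of a single cycle of length $l$ sit at $c_{a+s}$ and $c_{a'+s}$ at time $s$, which differ because $a\not\equiv a'\pmod l$; and two windows $W_b,W_{b'}$ of a single path sit at $d_{b+s}$ and $d_{b'+s}$, again distinct. Counting one walk per vertex of each cycle and $|P_j|-p$ per path yields exactly $\nu_p(D)$ pairwise independent $p$-walks, so $\alpha_p(D)\ge\nu_p(D)$.

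For the upper bound I pass to the \emph{time-expanded} digraph $\tilde D$ on vertex set $V\times\{0,1,\dots,p\}$, with an arc from $(u,s)$ to $(v,s+1)$ whenever $(u,v)\in E$. A $p$-walk of $D$ is precisely a directed path of $\tilde D$ from layer $0$ to layer $p$, and two walks are independent exactly when the corresponding paths are vertex-disjoint in $\tilde D$. Adjoining a source joined to all of layer $0$ and a sink joined from all of layer $p$, Menger's theorem (equivalently, integral max-flow with unit node capacities) identifies $\alpha_p(D)$ with the minimum number of layered vertices meeting every $p$-walk. It then remains to convert such an optimum into a static object in $D$: from an optimal set of $k=\alpha_p(D)$ independent walks, the occupied vertex-sets $A_0,\dots,A_p$ all have size $k$ and the non-colliding token dynamics induce bijections $\phi_s:A_s\to A_{s+1}$ realised by arcs of $D$, from which I would extract a vertex-disjoint family of cycles and paths of value at least $k$.

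The main obstacle is exactly this last extraction: turning the dynamic picture of $k$ non-colliding tokens moving for $p$ steps into a decomposition of a subgraph of $D$ into vertex-disjoint cycles and paths, while controlling the number of path components (each path costs $p$ in the objective). I would handle it by reformulating $\nu_p(D)$ as a matching problem: split each vertex $v$ into $v^{\mathrm{out}}$ and $v^{\mathrm{in}}$ and turn each arc $(u,v)$ into an edge $u^{\mathrm{out}}v^{\mathrm{in}}$, so that subgraphs of $D$ with in- and out-degree at most $1$ correspond to matchings $M$; a short computation then gives $\nu_p(D)=\max_M\bigl(|M|-(p-1)\,s(M)\bigr)$, with $s(M)$ the number of path-starts. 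Matching the optimum of this objective to the minimum cut of $\tilde D$ via K\"onig's theorem (and the integrality of both linear programs) closes the gap $\alpha_p(D)\le\nu_p(D)$ and completes the proof.
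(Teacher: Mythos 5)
Your achievability direction is correct and complete: the walks produced by a vertex-disjoint family of cycles and paths are pairwise independent for exactly the reasons you give, yielding $\alpha_p(D)\ge\nu_p(D)$. Note, however, that this is the only part of the argument the paper itself contains: the theorem is quoted from Poljak's paper \cite{Pol89}, and the paragraph preceding the statement sketches precisely this construction and nothing more, so there is no in-paper proof of the hard direction to compare against. Your layered-graph setup is also sound --- independence of $p$-walks does correspond exactly to vertex-disjointness of layer-$0$-to-layer-$p$ paths in $\tilde D$, and Menger identifies $\alpha_p(D)$ with the minimum layered vertex cut.

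The genuine gap is the step you yourself flag as ``the main obstacle,'' and your proposed fix does not close it. First, the naive extraction fails outright: the union over $s$ of the token arcs $\{(w_{i,s},w_{i,s+1})\}$ need not have in- and out-degree at most $1$. For example, with $p=2$, the independent walks $(1,3,4)$ and $(5,2,3)$ use the arcs $1\to 3$, $3\to 4$, $5\to 2$, $2\to 3$, so vertex $3$ has in-degree $2$; the walks' supports give only value $1$, and one must reroute to the path $(5,2,3,4)$ to recover value $2$. Devising the general exchange/rerouting argument --- controlling, as you note, the number of path components, each of which costs $p$ --- is exactly the content of Poljak's theorem. Second, the matching reformulation does not rescue this: while $\nu_p(D)=\max_M\bigl(|M|-(p-1)\,s(M)\bigr)$ is a correct identity, the quantity $s(M)$ (the number of path components of the degree-constrained subgraph encoded by $M$) is \emph{not} a linear function of the edge-incidence vector of $M$ --- it depends on the global connectivity structure of the components --- so the objective is not a linear program over the matching polytope, K\"onig's theorem does not pair it with the node cut of $\tilde D$, and the ``integrality of both linear programs'' is asserted without either LP being written down, let alone shown integral. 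No dual correspondence between layered minimum cuts and this penalized maximum is exhibited; that min--max is precisely the statement to be proved. As written, the inequality $\alpha_p(D)\le\nu_p(D)$ remains open in your proposal, and closing it requires Poljak's actual argument (or an equivalent uncrossing argument on families of independent walks), not a citation of standard matching duality.
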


\begin{corollary}
For all $p \ge n$,
$$
	\alpha_p(D) = \alpha_n(D) = \max \left\{ \sum_{i=1}^r |C_i| \right\},
$$
where the maximum is taken over all families of pairwise vertex-disjoint cycles.
\end{corollary}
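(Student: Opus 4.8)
The plan is to deduce the corollary directly from Poljak's theorem, exploiting the fact that the hypothesis $p \ge n$ forces the contribution of every path to be non-positive. The right-hand side of the corollary is the restriction of the maximum in Poljak's theorem to families that use cycles only (i.e. with $s = 0$), so one inequality is free and the whole argument reduces to showing that an optimal family need not contain any path.

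First I would establish the easy inequality $\alpha_p(D) \ge \max\{\sum_{i=1}^r |C_i|\}$: any family of pairwise vertex-disjoint cycles is itself an admissible family in Poljak's theorem (taking no paths), so the maximum there dominates $\sum_{i=1}^r |C_i|$ for every such family. For the reverse direction I would fix an optimal family $C_1,\dots,C_r,P_1,\dots,P_s$ realising $\alpha_p(D)$. Since the paths are pairwise vertex-disjoint and all their vertices lie in $V$, we have $\sum_{j=1}^s |P_j| \le n$, and in particular $|P_j| \le n$ for each $j$. This is exactly where the hypothesis enters: because $p \ge n$, each term satisfies $|P_j| - p \le n - p \le 0$, so $\sum_{j=1}^s (|P_j| - p) \le 0$. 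Discarding the paths therefore cannot decrease the objective, giving
$$
	\alpha_p(D) = \sum_{i=1}^r |C_i| + \sum_{j=1}^s (|P_j| - p) \le \sum_{i=1}^r |C_i| \le \max\left\{\sum_{i=1}^r |C_i|\right\}.
$$
Combining the two inequalities yields $\alpha_p(D) = \max\{\sum_{i=1}^r |C_i|\}$ for every $p \ge n$. Since the resulting expression no longer mentions $p$, its value is the same for all $p \ge n$; specialising to $p = n$ gives the chain $\alpha_p(D) = \alpha_n(D)$ and completes the proof.

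I do not expect a genuine obstacle here, as the statement is essentially a reading-off of Poljak's formula in the regime $p \ge n$. The only point that needs care is the justification that the paths may be dropped: this rests on vertex-disjointness, which guarantees $\sum_j |P_j| \le n$ and hence $|P_j| - p \le 0$, together with the trivial observation that deleting non-positive summands leaves the objective unchanged or larger. I would flag the inequality $|P_j| - p \le 0$ explicitly, since it is the precise place where the hypothesis $p \ge n$ is used and where the transition from the mixed cycle-and-path optimum to a cycles-only optimum occurs.
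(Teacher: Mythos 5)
Your proposal is correct and is precisely the argument the paper leaves implicit: the corollary is stated without proof as an immediate consequence of Poljak's theorem, and your observation that $p \ge n$ forces $|P_j| - p \le 0$ (so paths can be discarded from any optimal family, which remains admissible) is exactly the intended reading-off. The remaining step, that the cycles-only maximum is independent of $p$ and hence equals $\alpha_n(D)$, is also handled correctly.
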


A \textbf{finite dynamical system} is a function $f: [q]^n \to [q]^n$, where $[q] = \{0,1,\dots,q-1\}$ is a finite alphabet; we denote $f = (f_1,\ldots,f_n)$, where $f_v : [q]^n \to [q]$. The \BF{interaction graph} $\IG(f)$ is the digraph with vertex set $V = \{1, \dots, n\}$ such that $(u,v) \in E(\IG(f))$ if and only if $f_v$ depends essentially on $u$, i.e. there exist $x,y \in [q]^n$ which only differ on coordinate $u$ such that $f_v(x) \ne f_v(y)$. The set of all functions over an alphabet of size $q$ and whose interaction graph is (contained in) $D$ is denoted as
\begin{align*} 
	\functions[D, q] &:= \{f : [q]^n \to [q]^n : \IG(f) = D \},\\
	\functions(D, q) &:= \{f : [q]^n \to [q]^n : \IG(f) \subseteq D \}.
\end{align*}

We consider successive iterations of $f$; we thus denote $f^1(x) = f(x)$ and $f^{k+1}(x) = f(f^k(x))$ for all $k \ge 1$. Recall that $x$ is an \textbf{image} if there exists $y$ such that $x = f(y)$; $x$ is a \BF{periodic point} of $f$ if there exists $k \in \mathbb{N}$ such that $f^k(x) = x$. We are interested in the following quantities:
\begin{enumerate}
	\item the \BF{rank} of $f$ is the number of its images: $|\Ima(f)|$; 
	
	\item the \BF{periodic rank} of $f$ is the number of its periodic points: $|\Per(f)|$.
\end{enumerate}
It will be useful to scale these two quantities using the logarithm in base $q$:
\begin{align*}
	\ima(f) &:= \log_q |\Ima(f)|,\\
	\per(f) &:= \log_q |\Per(f)|.
\end{align*}
Moreover, the maximum (periodic) rank over all functions in $\functions[D, q]$ is denoted as
\begin{align*}
	\ima[D, q] &:= \max \{ \ima(f) : f \in \functions[D,q] \},\\ 
	\per[D, q] &:= \max \{ \per(f) : f \in \functions[D,q] \};
\end{align*}
and $\ima(D, q)$ and $\per(D, q)$ are defined similarly. We finally note that $\per(f) = \ima(f^p)$ for all $p \ge q^n - 1$. Therefore, the main strategy is to maximise the scaled rank of $f^p$ for all $p$.

\subsection{The main theorem and its proof}

\begin{theorem} \label{th:max_rank}
For all $D$, $p$, and $q \ge 3$,
$$
	\max \{ \ima(f^p): f \in \functions[D, q] \} = \max \{ \ima(f^p): f \in \functions(D, q) \} = \alpha_p(D).
$$
For all $D$, $p$,
$$
	\max \{ \ima(f^p): f \in \functions(D, 2) \} = \alpha_p(D).
$$
\end{theorem}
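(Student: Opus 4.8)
The plan is to prove the two inequalities separately: an upper bound $\ima(f^p) \le \alpha_p(D)$ that holds uniformly over all $f \in \functions(D,q)$ for every $q$ (in particular $q=2$), and a matching lower bound realised by an explicit Boolean system whose interaction graph is a subgraph of $D$.

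For the upper bound I would pass to the layered ``time-expansion'' of $D$: the acyclic digraph $\hat D$ on vertex set $V \times \{0,1,\dots,p\}$ with an arc from $(u,s)$ to $(v,s+1)$ whenever $(u,v) \in E$. Writing $x^{(0)} = x$ and $x^{(s)} = f(x^{(s-1)})$, the value $x^{(s)}_v$ attached to $(v,s)$ depends, since $\IG(f) \subseteq D$, only on the values attached to the in-neighbours $(u,s-1)$ of $(v,s)$ in $\hat D$. A family of pairwise independent $p$-walks is precisely a family of vertex-disjoint paths from layer $0$ to layer $p$ in $\hat D$, so $\alpha_p(D)$ is the maximum number of such disjoint paths; by Menger's theorem this equals the minimum size of a vertex set $C$ meeting every layer-$0$-to-layer-$p$ path. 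Such a cut screens the output from the input: letting $R$ be the set of vertices reachable from layer $0$ in $\hat D - C$, no layer-$p$ vertex lies in $R$, and every vertex outside $R \cup C$ has all its in-neighbours outside $R$. Processing layers in increasing order, one checks by induction that every value at a vertex outside $R$ --- in particular every output $x^{(p)}_v$ --- is a function of the cut values $(x^{(s)}_w)_{w \in C}$ alone. Hence $|\Ima(f^p)| \le q^{|C|} = q^{\alpha_p(D)}$, giving $\ima(f^p) \le \alpha_p(D)$; this argument is insensitive to $q$.

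For the lower bound I would invoke Poljak's Theorem to fix pairwise vertex-disjoint cycles $C_1,\dots,C_r$ and paths $P_1,\dots,P_s$ in $D$ achieving $\sum_i |C_i| + \sum_j (|P_j| - p) = \alpha_p(D)$, and build $f \in \functions(D,2)$ supported on this family. On a cycle $C_i = (c_0,\dots,c_{l-1})$ set $f_{c_{a+1}}(x) = x_{c_a}$ with indices mod $l$, a cyclic shift that is a bijection; on a path $P_j = (d_0,\dots,d_{m-1})$ set $f_{d_b}(x) = x_{d_{b-1}}$ for $b \ge 1$ and $f_{d_0} \equiv 0$; and set $f_v \equiv 0$ for every $v$ outside the family. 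Each local function either copies a single in-neighbour along an arc of $D$ or is constant, so $\IG(f) \subseteq D$. Since the blocks are vertex-disjoint, $\Ima(f^p)$ is the product of the images of $f^p$ on the blocks: a length-$l$ cycle contributes a bijection, hence $l = |C_i|$ free coordinates, while on a path of $m$ vertices the shift gives $x^{(p)}_{d_b} = x_{d_{b-p}}$ for $b \ge p$ and $x^{(p)}_{d_b} = 0$ for $b < p$, contributing $m - p = |P_j| - p$ free coordinates. Multiplying yields $|\Ima(f^p)| = 2^{\sum_i |C_i| + \sum_j(|P_j|-p)} = 2^{\alpha_p(D)}$, so $\ima(f^p) \ge \alpha_p(D)$, and equality follows.

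I expect the main obstacle to be the upper bound, specifically verifying that a minimum vertex cut of $\hat D$ genuinely bounds the number of images. The tempting shortcut of applying the $p=1$ bound to $f^p$ fails, because $\IG(f^p)$ is only contained in the $p$-th power of $D$, whose $\alpha_1$ is in general strictly larger than $\alpha_p(D)$; the full layered structure must therefore be retained throughout the cut argument. By contrast the Boolean restriction poses no difficulty here: because the statement only requires $\IG(f) \subseteq D$ rather than $\IG(f) = D$, the copy-and-constant construction is alphabet-agnostic and already realises the bound over $\{0,1\}$. This is precisely why $\functions(D,2)$ attains $\alpha_p(D)$ without the $q \ge 3$ hypothesis that the equal-interaction-graph case requires.
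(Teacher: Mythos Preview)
Your upper bound via the layered graph $\hat D$ and Menger's theorem is correct and is essentially the paper's argument unpacked: the paper invokes a max-flow/min-cut theorem for term sets from \cite{RG11}, but the underlying graph $G_{\Gamma^p}$ is exactly your $\hat D$, and the ``screening'' step you describe is what that black box would do internally. Your lower-bound construction for $\functions(D,2)$ via Poljak's decomposition into vertex-disjoint cycles and paths is likewise correct and matches the paper's first (easy) construction; if anything, writing it in terms of the cycles and paths rather than the walks themselves makes well-definedness of the local functions cleaner.

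However, you have not proved the full statement. The theorem also asserts that for $q \ge 3$ the maximum over $\functions[D,q]$---functions whose interaction graph is \emph{equal} to $D$, not merely contained in it---already equals $\alpha_p(D)$. Your copy-and-constant system has interaction graph consisting only of the arcs along the chosen cycles and paths, so in general $\IG(f) \subsetneq D$; it lies in $\functions(D,q)$ but not in $\functions[D,q]$. You acknowledge in your last paragraph that the equal-interaction-graph case ``requires'' $q \ge 3$, but you never supply the construction that exploits the third symbol. This is the substantive part of the theorem (and the reason the $q \ge 3$ hypothesis is there at all). The paper handles it with a separate lemma: each vertex $w_{i,s+1}$ still tries to copy $x_{w_{i,s}}$, but the remaining in-neighbours are given a ``red light'' role via the value $2$, forcing essential dependence on every arc of $D$ while preserving injectivity along the walks on the set of states whose off-walk coordinates lie in $\{0,1\}$. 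Without an argument of this kind, the equality $\max\{\ima(f^p): f \in \functions[D,q]\} = \alpha_p(D)$ remains unproved.
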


\begin{corollary}[Maximum rank] \label{cor:max_rank}
For all $D$ and $q \ge 3$,
\begin{align*}
	\ima[D, q] = \ima(D, q) = \alpha_1(D).
\end{align*}
For $q=2$,
$$
	\ima(D, 2) = \alpha_1(D).
$$
\end{corollary}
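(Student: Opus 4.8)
The plan is to derive the corollary as the special case $p=1$ of Theorem~\ref{th:max_rank}, since this is exactly the statement concerning $\ima(f)=\ima(f^1)$. Concretely, I would first observe that $f^1=f$, hence $\ima(f^1)=\ima(f)$ for every $f$, so that the three quantities appearing in Theorem~\ref{th:max_rank} at $p=1$ are, by the very definitions of $\ima[D,q]$ and $\ima(D,q)$ as maxima over $\functions[D,q]$ and $\functions(D,q)$, equal to $\ima[D,q]$, $\ima(D,q)$ and $\alpha_1(D)$ respectively. Reading Theorem~\ref{th:max_rank} at $p=1$ then yields $\ima[D,q]=\ima(D,q)=\alpha_1(D)$ for $q\ge 3$, and $\ima(D,2)=\alpha_1(D)$ from its Boolean part. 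At the level of the corollary this reduction is entirely routine, and I do not expect any obstacle; the content is carried by the theorem.

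It is nonetheless worth noting that the $p=1$ case can be proved directly and more cheaply than the general theorem, since it needs only Edmonds' formula for $\alpha_1(D)$ rather than Poljak's formula. For the upper bound, I would take any $f\in\functions(D,q)$ and a set $S\subseteq V$ attaining $\max\{|S|-|\NIn(S)|:S\subseteq V\}$. Because every $f_v$ with $v\in S$ depends only on the coordinates in $\NIn(S)$, the $S$-indexed block of an image takes at most $q^{|\NIn(S)|}$ values and the remaining block at most $q^{n-|S|}$ values, whence $|\Ima(f)|\le q^{|\NIn(S)|+n-|S|}=q^{\alpha_1(D)}$ and so $\ima(f)\le\alpha_1(D)$.

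For the matching lower bound I would use that a maximum family of $\alpha_1(D)$ pairwise independent arcs has distinct tails and distinct heads, i.e.\ forms a partial permutation of $V$. Copying along these arcs (set $f_v(x)=x_u$ for the unique chosen arc $(u,v)$ into $v$, and $f_v\equiv 0$ otherwise) produces an $f$ with $\IG(f)\subseteq D$ whose image consists of all independent assignments to the $\alpha_1(D)$ distinct tail-coordinates, so $\ima(f)=\alpha_1(D)$; this already establishes the $q=2$ statement and the value of $\ima(D,q)$. The one genuinely delicate point—and the place where the hypothesis $q\ge 3$ enters—is promoting this construction to one with $\IG(f)=D$ exactly: every arc of $D$ must be reinstated as an essential dependency without enlarging the image, which I would arrange by using a spare alphabet symbol to encode each otherwise-missing dependency harmlessly. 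I expect this exact-realisation step to be the crux, precisely because it is what separates the $q\ge 3$ and $q=2$ regimes.
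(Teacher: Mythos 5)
Your primary derivation is exactly the paper's (implicit) proof: the corollary is the $p=1$ instance of Theorem~\ref{th:max_rank}, with $\ima[D,q]$, $\ima(D,q)$ and $\alpha_1(D)$ matching the three quantities in the theorem by definition, and the paper states the corollary with no further argument. Your supplementary direct proof via Edmonds' formula is sound as far as it goes (the upper bound via $f_S$ depending only on $x_{\NIn(S)}$ and the copy construction for the lower bound are both correct), and you rightly flag the exact-realisation step for $\IG(f)=D$ as the crux requiring $q\ge 3$ --- which is precisely what the paper's ``red light'' construction in Lemma~\ref{lem:q=2} handles --- but since your main line already closes the proof via the theorem, this sketch is harmless extra content rather than a gap.
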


\begin{corollary}[Maximum periodic rank] \label{cor:max_periodic}
For all $D$ and $q \ge 3$,
$$
	\per[D, q] = \per(D, q) = \alpha_n(D).
$$
For $q=2$,
$$
	\per(D, 2) = \alpha_n(D).
$$
\end{corollary}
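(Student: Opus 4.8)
The plan is to prove the three displayed equalities of Theorem~\ref{th:max_rank}; the two corollaries then follow at once, Corollary~\ref{cor:max_rank} by setting $p=1$ and Corollary~\ref{cor:max_periodic} by combining $\per(f)=\ima(f^p)$ for $p\ge q^n-1$ with the identity $\alpha_p(D)=\alpha_n(D)$ for $p\ge n$. Since $\functions[D,q]\subseteq\functions(D,q)$, the middle term never exceeds the right-hand maximum, so it suffices to establish: (U) an upper bound $\ima(f^p)\le\alpha_p(D)$ valid for every $f\in\functions(D,q)$ and every $q\ge2$; (L2) a construction in $\functions(D,2)$ attaining $\alpha_p(D)$, which settles the Boolean statement and the ``contained'' maximum; and (L3) a construction in $\functions[D,q]$ attaining $\alpha_p(D)$ for every $q\ge3$, which upgrades the equality from ``contained in $D$'' to ``equal to $D$''.

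For (U) I would pass to the layered digraph $D^{(p)}$ with vertex set $V\times\{0,\dots,p\}$ and an arc $(u,s)\to(v,s+1)$ whenever $(u,v)\in E$. A family of pairwise independent $p$-walks is exactly a family of pairwise vertex-disjoint directed paths from layer $0$ to layer $p$, so by Menger's theorem $\alpha_p(D)$ equals the minimum size of a vertex set $C\subseteq V\times\{0,\dots,p\}$ meeting every such path. Writing $x^{(s)}=f^s(x^{(0)})$, the core is a determination lemma: if $C$ is such a cut then $x^{(p)}$ is a function of the cut values $(x^{(s)}_u)_{(u,s)\in C}$ alone. I would prove this by induction on the layer $t$, showing that every pair $(v,t)$ that is \emph{not} reachable from layer $0$ by a path avoiding $C$ has $x^{(t)}_v$ determined by the cut values (pairs lying in $C$ being determined outright): the base case is forced because such a layer-$0$ pair must lie in $C$, and the inductive step uses that every in-neighbour $(u,t-1)$ of an unreachable, non-cut pair is itself unreachable, so $x^{(t)}_v=f_v$ applied to already-determined values. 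Since $C$ is a cut, no layer-$p$ pair is reachable, hence $x^{(p)}$ is determined and $|\Ima(f^p)|\le q^{|C|}=q^{\alpha_p(D)}$. This is the exact $p$-step generalisation of the K\"onig--Ore projection bound that yields $\alpha_1$.

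For (L2) I would use Poljak's optimal family of vertex-disjoint cycles $C_1,\dots,C_r$ and paths $P_1,\dots,P_s$ (discarding any path with $|P_j|\le p$). On each cycle I set $f$ to be the cyclic shift $f_{c_{i+1}}(x)=x_{c_i}$, a permutation of the $|C_i|$ cycle coordinates, so $f^p$ restricted to them is a bijection contributing $|C_i|$. On each path I set the source to a constant and $f_{d_{i+1}}(x)=x_{d_i}$; then $x^{(p)}_{d_i}=x^{(0)}_{d_{i-p}}$ for $i\ge p$ and is constant otherwise, so $f^p$ recovers $m-p$ free input symbols and contributes $|P_j|-p$. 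All remaining coordinates are set to constants. Because each component evolves using only its own coordinates, $\Ima(f^p)$ is the product of the component images and $\ima(f^p)=\sum_i|C_i|+\sum_j(|P_j|-p)=\alpha_p(D)$. Shifts and constants are legal over any alphabet, in particular over $q=2$, so this proves both the Boolean equality and the ``contained'' equality for all $q$.

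The remaining and hardest step is (L3): over $q\ge3$ we must realise the same value $\alpha_p(D)$ with \emph{every} arc of $D$ essential. The bound (U) already forces $\ima(f^p)\le\alpha_p(D)$ for any modification whose interaction graph stays inside $D$, so the only thing to protect is the lower bound, i.e.\ a surviving family of $q^{\alpha_p(D)}$ distinct images. Starting from the (L2) function I would make each missing arc $(u,v)$ essential: at a shift vertex I replace the transmitted value $x_w$ by a quasigroup combination $x_w\ast_v h_v$, where $\ast_v$ is a Latin square on $[q]$ and $h_v$ depends essentially on the remaining in-neighbours in $\NIn(v)$, and at a constant vertex I replace the constant by such an essential function. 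The Latin-square property preserves essential dependence on $w$, hence the cyclic bijections, while forcing dependence on every $u$, so $\IG(f)=D$. The delicate point — and the place where $q\ge3$ is genuinely needed — is to confine the perturbation to inputs involving a reserved third symbol, so that a full $q^{\alpha_p(D)}$-element set of initial states still follows the unperturbed shift dynamics and still produces $q^{\alpha_p(D)}$ distinct images. I expect this simultaneous bookkeeping — making every arc essential while leaving an untouched injective ``core'' of full size — to be the main obstacle, and it is exactly what breaks down for $q=2$, where the absence of a spare symbol is what separates the ``equal to $D$'' statement from the ``contained in $D$'' statement, as anticipated in the introduction.
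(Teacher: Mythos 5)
Your reduction of the corollary to Theorem \ref{th:max_rank} is exactly the paper's: take $p \ge \max(n, q^n-1)$, use $\per(f)=\ima(f^p)$ and $\alpha_p(D)=\alpha_n(D)$. Your steps (U) and (L2) are correct. But step (L3) --- which is precisely what the first display $\per[D,q]=\per(D,q)=\alpha_n(D)$ requires --- is not a proof: you state a strategy (perturb the shift system by quasigroup combinations confined to inputs involving a reserved symbol, so that every arc of $D$ becomes essential while a $q^{\alpha_p(D)}$-sized injective core survives) and then explicitly name the bookkeeping as the main obstacle without resolving it. As sketched, the plan has a real failure point: replacing the transmitted value $x_w$ by $x_w \ast_v h_v$ keeps the dependence on $w$, but walk coordinates themselves range over all of $[q]$, including any ``reserved'' symbol, so you cannot reserve a symbol out of the core; unless $h_v$ is provably constant along every core trajectory, the perturbation fires inside the core and injectivity along the walks is lost. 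The missing idea is an \emph{invariant} that tames this, and the paper's Lemma \ref{lem:q=2} supplies it concretely: $w_{i,s+1}$ copies $x_{w_{i,s}}$ unless all its \emph{other} in-neighbours display the value $2$, in which case it flips $0 \leftrightarrow 1$ (making every extra in-arc essential while remaining injective in $x_{w_{i,s}}$), and every off-walk vertex computes a $\{0,1\}$-valued function depending on all its in-neighbours. Starting from the core $X=\{x : x_{U^0}=(0,\dots,0)\}$, an induction (Claims \ref{claim:1} and \ref{claim:2}) shows off-walk coordinates stay in $\{0,1\}$ forever, so the ``all other in-neighbours equal $2$'' trigger can only involve walk coordinates and fires consistently for two trajectories being compared; injectivity on the walk coordinates then propagates, giving $\ima(f^p)=\alpha_p(D)$ with $\IG(f)=D$. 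So your diagnosis of where $q\ge 3$ enters is right, but the corollary's first equality genuinely depends on this mechanism, which your proposal leaves open.

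Where your proposal is complete, it is also independently interesting. For the upper bound the paper builds the same layered digraph but concludes by citing the term-set min-cut theorem of Riis and Gadouleau (Theorem \ref{th:bound_dispersion}) together with max-flow min-cut, whereas your determination lemma --- the values at a vertex cut of the layered digraph determine $f^p(x)$, proved by induction on layers over the vertices unreachable from layer $0$ avoiding the cut --- plus Menger's theorem gives a self-contained elementary proof of Lemma \ref{lem:upper_bound}. Your (L2) construction via Poljak's optimal cycle/path family (cyclic shifts on cycles, shifts with constant sources on paths) is the paper's copying system in slightly different clothes and is equally valid.
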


\begin{corollary}[Reversible dynamics in parallel] \label{cor:reversible_parallel}
For any $q \ge 3$, there exists $f \in \functions[D,q]$ which is a permutation of $[q]^n$ if and only if all the vertices of $D$ can be covered by disjoint cycles.
\end{corollary}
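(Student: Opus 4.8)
The plan is to reduce the claim to the already-established value of the maximum rank, and then translate the resulting condition $\alpha_1(D) = n$ into the combinatorics of cycle covers. First I would note that a function $f : [q]^n \to [q]^n$ is a permutation of $[q]^n$ exactly when it is surjective; since its domain and codomain are finite of the same cardinality $q^n$, surjectivity is equivalent to $|\Ima(f)| = q^n$, i.e.\ to $\ima(f) = n$. As no function can have $\ima(f) > n$, a permutation $f \in \functions[D,q]$ exists if and only if $\ima[D,q] = n$. By Corollary \ref{cor:max_rank}, $\ima[D,q] = \alpha_1(D)$ for every $q \ge 3$, so the existence of a reversible FDS with interaction graph exactly $D$ is equivalent to $\alpha_1(D) = n$.

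It then remains to show that $\alpha_1(D) = n$ if and only if the vertices of $D$ can be covered by vertex-disjoint cycles. For this I would unwind the definition of $\alpha_1(D)$ directly. A family of $n$ pairwise independent $1$-walks is a collection of arcs $(a_i, b_i) \in E$, $1 \le i \le n$, such that the $a_i$ are pairwise distinct and the $b_i$ are pairwise distinct; since there are $n$ of each, both $\{a_i\}$ and $\{b_i\}$ exhaust $V$. Setting $\sigma(a_i) = b_i$ thus defines a permutation $\sigma$ of $V$ with $(v, \sigma(v)) \in E$ for every $v$. The orbits of $\sigma$ decompose $V$ into vertex-disjoint cycles of $D$ (a fixed point $\sigma(v) = v$ corresponding to a loop at $v$), so $V$ is covered by disjoint cycles. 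Conversely, any covering of $V$ by vertex-disjoint cycles yields such a permutation $\sigma$, and hence $n$ pairwise independent $1$-walks, giving $\alpha_1(D) = n$. Alternatively, one may read off the same equivalence from Poljak's theorem specialised to $p = 1$, where the vertex-disjoint paths are forced to vanish from any optimal family attaining the value $n$.

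The heavy lifting is already done by Corollary \ref{cor:max_rank}, so I expect no serious obstacle here; the proof is essentially a dictionary between three equivalent conditions. The only points requiring care are the elementary observation that maximal rank $n$ coincides with bijectivity, and the verification that the permutation extracted from $n$ independent $1$-walks genuinely factors into cycles lying inside $D$ — which is immediate once one recalls that a loop counts as a cycle of length one.
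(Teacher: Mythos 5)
Your proposal is correct and matches the route the paper intends: the corollary is stated as an immediate consequence of Corollary \ref{cor:max_rank}, via precisely your observations that a permutation is the same as a function of scaled rank $n$ and that $\alpha_1(D) = n$ exactly when the $n$ independent arcs define a permutation of $V$ whose orbits are disjoint cycles of $D$ (loops counting as cycles of length one). Your careful unwinding of independent $1$-walks into a cycle cover fills in the dictionary the paper leaves implicit, so there is nothing to correct.
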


The case $q=2$ is indeed specific, for there exist graphs $D$ such that $\max \{\ima(f^p): f \in \functions[D, 2] \} < \alpha_p(D)$ for all $p \ge 1$. We shall investigate this in the next subsection.

The rest of this subsection is devoted to the proof of Theorem \ref{th:max_rank}. We begin with the upper bound on the scaled rank.

%\section{Upper bound on the maximum rank} \label{sec:upper_bound}

We now review the communication model based on terms from logic introduced by Riis and Gadouleau in \cite{RG11}. Let $\{x_1, \dots, x_k\}$ be a set of variables and consider a set of function symbols $\{f_1, \dots, f_l\}$ with respective arities (numbers of arguments) $d_1, \dots, d_l$. A \BF{term} is defined to be an object obtained from applying function symbols to variables recursively. We say that $u$ is a subterm of $t$ if the term $u$ appears in the definition of $t$. Furthermore, $u$ is a \BF{direct subterm} of $t$ if $t = f_j(v_1,\ldots,u,\ldots,v_{d_j})$, and we denote it by $u \prec t$.

Let $\Gamma = \{t_1, \dots,t_r\}$ be a set of terms built on variables $x_1, \dots,x_k$ and function symbols $f_1, \dots,f_l$ of respective arities $d_1,d_2,\ldots,d_l$. We denote the set of variables that occur in terms in $\Gamma$ as $\Gamma_{{\rm var}}$ and the collection of subterms of one or more terms in $\Gamma$ as $\Gamma_{\rm sub}$.
To the term set $\Gamma$ we associate the digraph $G_{\Gamma} = (V_\Gamma = \Gamma_{\rm sub}, E_\Gamma = \{(u,v) : u \prec v\})$. The set of sources in $G_\Gamma$ is $\Gamma_{{\rm var}}$ and the set of sinks is $\Gamma$. The min-cut of $\Gamma$ is the minimum size of a vertex cut of $G_\Gamma$ between $\Gamma_{{\rm var}}$ and $\Gamma$.

An \BF{interpretation} for $\Gamma$ over $[q]$ is an assignment of the function symbols $\psi = \{\bar{f}_1, \dots,\bar{f}_l\}$, where $\bar{f}_i : [q]^{d_i} \rightarrow [q]$ for all $1 \leq i \leq l$. We note that $\bar{f}_i$ may not depend essentially on all its $d_i$ variables. Once all the function symbols $f_i$ are assigned functions $\bar{f}_i$, then by composition each term $t_j \in \Gamma$ is assigned a function $\bar{t}_j : [q]^k \rightarrow [q]$. We shall abuse notations and also denote the \BF{induced mapping} of the interpretation as $\psi : [q]^k \rightarrow [q]^r$, defined as
$\psi(a) = \big(\bar{t}_1(a), \ldots,\bar{t}_r(a)\big)$.

\begin{theorem}[\cite{RG11} with our notation] \label{th:bound_dispersion}
Let $\Gamma$ be a term set with min-cut of $\rho$ and $\psi$ be an interpretation for $\Gamma$ over $[q]$, then $\ima(\psi) \leq \rho$.
\end{theorem}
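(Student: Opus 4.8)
The plan is to use the minimum vertex cut as a bottleneck through which all information flowing from the variables to the terms must pass, so that the number of images of $\psi$ cannot exceed the number of possible value-tuples realised on the cut. Concretely, I would fix a minimum vertex cut $W \subseteq \Gamma_{\rm sub}$ with $|W| = \rho$ separating the sources $\Gamma_{\rm var}$ from the sinks $\Gamma$. Since $\bar{u}(a) \in [q]$ for every subterm $u$, the assignment $a \mapsto (\bar{u}(a))_{u \in W}$ takes at most $q^{|W|} = q^{\rho}$ distinct values. The whole proof reduces to showing that $\psi(a)$ is a function of these cut values alone; once that is done, $|\Ima(\psi)| \le q^{\rho}$, and hence $\ima(\psi) = \log_q |\Ima(\psi)| \le \rho$.

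To make the bottleneck argument precise, I would call a subterm $v$ \emph{separated} if every directed path in $G_\Gamma$ from a source to $v$ meets $W$, and then prove by induction on the acyclic structure of $G_\Gamma$ (which is a DAG because the direct-subterm relation strictly decreases term size) the following claim: for every separated $v$, the value $\bar{v}(a)$ depends only on $(\bar{u}(a))_{u \in W}$. The base case is $v \in W$, where $\bar{v}(a)$ is literally one of the cut coordinates. For the inductive step, a separated $v \notin W$ cannot be a source, since the trivial one-vertex path from such a source $v$ to itself would avoid $W$ and contradict separatedness; hence $v = f_j(w_1,\ldots,w_{d_j})$ is a compound term whose direct subterms $w_1,\ldots,w_{d_j}$ are its predecessors in $G_\Gamma$.

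The key sub-step is to verify that each direct subterm $w_i$ is itself separated: any source-to-$w_i$ path, extended by the arc $w_i \to v$, becomes a source-to-$v$ path, which must meet $W$; as the only added vertex $v$ lies outside $W$, the intersection already occurs along the original path to $w_i$, so $w_i$ is separated. By the induction hypothesis each $\bar{w}_i(a)$ is a function of the cut values, and therefore so is $\bar{v}(a) = \bar{f}_j\big(\bar{w}_1(a),\ldots,\bar{w}_{d_j}(a)\big)$, completing the induction. Applying the claim to every sink $t_j \in \Gamma$ — each of which is separated precisely because $W$ is a cut between sources and sinks — shows that the tuple $\psi(a) = (\bar{t}_1(a),\ldots,\bar{t}_r(a))$ is determined by $(\bar{u}(a))_{u \in W}$, which yields the bound.

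I expect the main obstacle to be formulating the ``separated'' induction cleanly and checking that the direct subterms of a separated vertex inherit the property, since this path-extension argument is exactly where the cut hypothesis is exploited; by contrast, the final counting and the reduction $\ima(\psi) = \log_q |\Ima(\psi)| \le \log_q q^{\rho} = \rho$ are routine.
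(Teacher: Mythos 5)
Your proof is correct: the induction on the DAG $G_\Gamma$ is well-founded (arcs strictly increase term depth), the path-extension step correctly shows that the direct subterms of a separated non-cut vertex are themselves separated, and the edge case of a variable that is simultaneously a sink is covered by your base case, since any vertex cut between $\Gamma_{\rm var}$ and $\Gamma$ must contain such a vertex. Note, however, that this paper states Theorem \ref{th:bound_dispersion} without proof, importing it from \cite{RG11}, so there is no in-paper argument to compare against; your bottleneck argument --- the tuple $(\bar{u}(a))_{u \in W}$ on a minimum cut $W$ determines $\psi(a)$, whence $|\Ima(\psi)| \le q^{|W|} = q^{\rho}$ and $\ima(\psi) \le \rho$ --- is the standard information-bottleneck proof and is essentially the argument of the cited source.
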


\begin{lemma} \label{lem:upper_bound}
For any $p \ge 1$ and $\bar{f} \in \functions(D, q)$, $\ima( \bar{f}^p ) \le \alpha_p(D)$.
\end{lemma}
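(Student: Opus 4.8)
The plan is to derive the bound $\ima(\bar f^p)\le\alpha_p(D)$ by encoding the $p$-fold iterate $\bar f^p$ as the induced mapping of a suitable term set $\Gamma$, and then invoking Theorem~\ref{th:bound_dispersion} together with Poljak's formula (Theorem~\ref{th:max_rank}'s prerequisite, the quoted Poljak theorem). First I would introduce, for each vertex $v$ and each time step $0\le s\le p$, a term $t_v^{(s)}$ built from variables $x_1,\dots,x_n$ and function symbols $f_1,\dots,f_n$ (with $f_v$ having arity equal to the in-degree $d_v$). The terms are defined recursively: $t_v^{(0)}=x_v$, and $t_v^{(s+1)}=f_v\big(t_{u_1}^{(s)},\dots,t_{u_{d_v}}^{(s)}\big)$, where $u_1,\dots,u_{d_v}$ are the in-neighbours of $v$ in $D$. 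Taking $\Gamma=\{t_v^{(p)}:v\in V\}$, any interpretation that assigns $\bar f_v$ to $f_v$ makes the induced mapping $\psi$ equal to $\bar f^p$ (the $v$-th coordinate of $\psi(a)$ is exactly the $v$-th coordinate of $\bar f^p(a)$). Hence $\ima(\bar f^p)=\ima(\psi)\le\rho$, the min-cut of $\Gamma$, by Theorem~\ref{th:bound_dispersion}.

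The remaining and main task is to show that this min-cut $\rho$ is at most $\alpha_p(D)$. The natural picture is that the subterm digraph $G_\Gamma$ is essentially the ``time-expanded'' or layered version of $D$: its vertices are (a quotient of) the pairs $(v,s)$ for $0\le s\le p$, with an arc from layer $s$ to layer $s+1$ from $(u,s)$ to $(v,s+1)$ whenever $(u,v)\in E$, sources being the layer-$0$ vertices (the variables) and sinks the layer-$p$ vertices. A vertex cut separating sources from sinks in this layered graph corresponds to a set of ``times'' at which we block every one of a family of walks from layer $0$ to layer $p$; by Menger's theorem the min-cut equals the maximum number of vertex-disjoint source-to-sink paths in $G_\Gamma$. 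Such disjoint paths are exactly independent $p$-walks in $D$: a source-to-sink path passes through one vertex per layer, giving a sequence $(w_0,\dots,w_p)$ that is a walk in $D$, and vertex-disjointness of two such paths forces $w_s\ne w_s'$ for every layer $s$, i.e. independence of the walks. Conversely a family of pairwise independent $p$-walks yields vertex-disjoint source-to-sink paths. Therefore the maximum number of disjoint paths is precisely $\alpha_p(D)$, and so $\rho=\alpha_p(D)$.

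The subtlety, and where I expect the main obstacle, is the identification of $G_\Gamma$ with the clean layered digraph. Because $\Gamma_{\rm sub}$ collects \emph{subterms} rather than abstract pairs $(v,s)$, two distinct pairs could in principle name the same term, which would merge vertices and potentially lower the min-cut below $\alpha_p(D)$; one must check that no such unintended coincidences occur (distinct occurrences at distinct layers give syntactically distinct terms, since the nesting depth of $t_v^{(s)}$ is exactly $s$), so the layer index is recoverable from the term and the layered structure is faithful. I would also have to handle the bookkeeping for vertices of $D$ with no in-neighbours or for short walks, which is exactly the reason Poljak's formula charges a path $P_j$ only $|P_j|-p$ walks rather than $|P_j|$: walks shorter than $p$ cannot reach layer $p$ and contribute nothing, so the count of disjoint source-to-sink paths matches $\alpha_p(D)$ rather than some larger quantity. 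Once the faithful layered description is in place, the equality $\rho=\alpha_p(D)$ follows from Menger's theorem, and combining with Theorem~\ref{th:bound_dispersion} gives $\ima(\bar f^p)\le\alpha_p(D)$, completing the proof of the lemma.
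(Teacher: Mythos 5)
Your proposal is essentially the paper's own proof: the paper likewise encodes $\bar f^p$ as the induced mapping of the recursively defined term set $\Gamma^p$, identifies the subterm graph with the $(p+1)$-layer time-expansion of $D$, maps vertex-disjoint source-to-sink paths to pairwise independent $p$-walks, and concludes via max-flow min-cut together with Theorem~\ref{th:bound_dispersion}. One small correction to your flagged subtlety: the nesting depth of $t_v^{(s)}$ need \emph{not} be exactly $s$ (if $v$'s ancestry at depth $s$ terminates only in in-degree-zero vertices, terms at different layers do coincide, e.g.\ $t_u^{(s)}=f_u()$ for all $s\ge 1$ when $u$ is a source), but such merged terms contain no variables and hence lie on no source-to-sink path, so the layer of every term on such a path is well defined and the inequality $\rho\le\alpha_p(D)$ — which is all the lemma needs, rather than your claimed equality — goes through unharmed.
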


\begin{proof}
For all $v \in V$, denoting $\NIn(v; D) = \{u_1, \dots, u_k\}$ sorted in increasing order, we have $\bar{f}_v(x) = \bar{f}_v(x_{u_1}, \dots, x_{u_k})$. By definition, $\bar{f}^p$ is the induced mapping of an interpretation for $\Gamma^p = \{t^p_1, \dots, t^p_n\}$, where $\Gamma^0 = \{t^0_1 = x_1, \dots, t^0_n = x_n\}$ and for all $1 \le s \le p$, 
$$
	t^s_v = f_v(t^{s-1}_{u_1}, \dots, t^{s-1}_{u_k}).
$$

The graph $G_{\Gamma^p} = (V_{\Gamma^p}, E_{\Gamma^p})$ is then given by
\begin{align*}
	V_{\Gamma^p} &= \Gamma^0 \cup \dots \cup \Gamma^p\\
	E_{\Gamma^p} &= \{ (t^{s-1}_w, t^s_v) : 1 \le s \le p, w \in \NIn(v; D) \}.
\end{align*}
A flow in $G_{\Gamma^p}$ is a set of vertex-disjoint paths from $\Gamma^0$ to $\Gamma^p$. Such a path is of the form $t_W = (t^0_{w_0}, \dots, t^p_{w_p})$ where $w_{s-1} \in \NIn(w_s; D)$; it naturally induces a walk in $D$: $W = (w_0, \dots, w_p)$. Since the paths $t_W$ and $t_{W'}$ are vertex-disjoint, the corresponding walks $W$ and $W'$ are independent. Therefore, the max-flow of $G_{\Gamma^p}$ is at most $\alpha_p(D)$. By the max-flow min-cut theorem and Theorem \ref{th:bound_dispersion}, $\ima(\bar{f}^p) \le \alpha_p(D)$.
\end{proof}

%\section{Achieving the upper bound} \label{sec:lower_bound}

Let $W_1, \dots, W_\alpha$ be $\alpha := \alpha_p(D)$ independent walks of length $p$; we denote $W_i = (w_{i,0}, \ldots, w_{i,p})$ and $W = \{w_{i,s} : 1 \le i \le \alpha , 0 \le s \le p\}$. By construction, if $w$ precedes $w'$ on one walk and $w'$ appears on another walk and has a predecessor there, then $w$ precedes $w'$ in the other walk as well. We also let $U = V \setminus W$ be the set of vertices which do not belong to any of these walks and for all $0 \le s \le p-1$, we denote $W^s = \{w_{i,s} : 1 \le i \le \alpha\}$, $U^s = V \setminus W^s$ and $U' = V \setminus (W^1 \cup \dots \cup W^p)$.

We can now construct the finite dynamical systems which attain the upper bound on the scaled rank. The case $q=2$ and $f \in \functions(D, 2)$ is easy. We use a finite dynamical system where $w_{i,s+1}$ simply copies the value $x_{w_{i,s}}$; this will transmit the value $x_{w_{i,0}}$ along the walk $W_i$.

\begin{lemma}
The function $f \in \functions(D, 2)$ defined as
\begin{align*}
	f_{w_{i,s+1}}(x) &= x_{w_{i,s}} & 0 \le s \le p-1, 1 \le i \le \alpha,\\
	f_u(x) &= 0  & \text{if } u \in U',
\end{align*}
satisfies $\ima(f^p) = \alpha_p(D)$.
\end{lemma}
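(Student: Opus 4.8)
The plan is to verify that $f$ is a well-defined member of $\functions(D,2)$ and then to sandwich $|\Ima(f^p)|$ between $2^{\alpha}$ and $2^{\alpha}$, where $\alpha := \alpha_p(D)$. Since $\ima(f^p) = \log_2 |\Ima(f^p)|$, this gives exactly $\ima(f^p) = \alpha_p(D)$. The lower bound comes from tracing values along the walks, and the upper bound comes directly from Lemma \ref{lem:upper_bound}.

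First I would check well-definedness. A vertex $v = w_{i,s+1}$ may lie on several of the $W_i$ and at several positions, so the copy rule could a priori assign $f_v$ two different predecessors. The consistency property recalled just before the lemma---if $w$ precedes $w'$ on one walk and $w'$ has a predecessor on another walk, then that predecessor is again $w$---guarantees that all these prescriptions agree, so $f_v$ is unambiguously the copy of a single in-neighbour. Since $U' = V \setminus (W^1 \cup \dots \cup W^p)$ is precisely the set of vertices receiving no copy rule, every coordinate is assigned exactly one local function. Moreover each copy rule $f_{w_{i,s+1}}(x) = x_{w_{i,s}}$ uses only the arc $(w_{i,s}, w_{i,s+1})$, which lies in $E$ because $W_i$ is a walk in $D$, and the constant rule uses no arcs; hence $\IG(f) \subseteq D$ and $f \in \functions(D,2)$.

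For the lower bound I would prove by induction on $s$ that $f^s(x)_{w_{i,t}} = x_{w_{i,t-s}}$ whenever $t - s \ge 0$. In the inductive step, $w_{i,t}$ with $t \ge 1$ carries a copy rule whose (consistently defined) source is $w_{i,t-1}$, so $f^s(x)_{w_{i,t}} = f^{s-1}(x)_{w_{i,t-1}}$, and the induction hypothesis closes the step. Taking $s = t = p$ yields $f^p(x)_{w_{i,p}} = x_{w_{i,0}}$ for every $i$. Because the walks are independent, both $w_{1,p}, \dots, w_{\alpha,p}$ and $w_{1,0}, \dots, w_{\alpha,0}$ are $\alpha$-tuples of pairwise distinct coordinates; so, as $x$ ranges over $[2]^n$, the tuple $(f^p(x)_{w_{1,p}}, \dots, f^p(x)_{w_{\alpha,p}})$ ranges over all of $[2]^{\alpha}$, giving $|\Ima(f^p)| \ge 2^{\alpha}$.

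The upper bound is then immediate: since $f \in \functions(D,2)$, Lemma \ref{lem:upper_bound} applies and gives $\ima(f^p) \le \alpha_p(D) = \alpha$, i.e. $|\Ima(f^p)| \le 2^{\alpha}$. Combining the two bounds yields $\ima(f^p) = \alpha_p(D)$, as required. The only genuinely delicate point is the well-definedness of the copy rules together with the matching bookkeeping in the induction; once the consistency property is invoked everything else is routine.
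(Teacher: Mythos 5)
Your proof is correct and takes essentially the same approach as the paper: both arguments show that the values $x_{w_{i,0}}$ are transmitted along the walks so that $f^p$ realises all $2^{\alpha}$ patterns on the coordinates $W^p$, with the matching upper bound supplied by Lemma \ref{lem:upper_bound}. The only cosmetic difference is that the paper proves by induction that $|f^s_{W^s}(X)| = |X|$ on the subcube $X$ fixing $x_{U^0} = (0,\ldots,0)$, whereas you derive the explicit identity $f^p(x)_{w_{i,p}} = x_{w_{i,0}}$ and, commendably, spell out the well-definedness of the copy rules, which the paper only notes in passing via the consistency remark preceding the lemma.
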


\begin{proof}
Let $X = \{x \in [q]^n: x_{U^0} = (0,\ldots,0)\}$; we then have $\log_q |X| = |W| = \alpha_p(D)$. It is easy to show, by induction on $s$, that for all $0 \le s \le p$, $|f^s_{W^s}(X)| = |X|$. Thus $\ima(f^p) = \alpha_p(D)$.
\end{proof}

For $q \ge 3$ and $f \in \functions[D, q]$, we use a finite dynamical system where $w_{i,s+1}$ wishes to copy the value $x_{w_{i,s}}$ whenever it can. Each other vertex $u \in \NIn(w_{i,s+1})$ has a red light (the value 2). If all lights are red, then $w_{i,s+1}$ cannot copy the value $x_{w_{i,s}}$ any more; instead it flips it from 0 to 1 and vice versa.

\begin{lemma} \label{lem:q=2}
For $q \ge 3$, the function $f \in \functions[D, q]$ defined as
\begin{align*}
	f_{w_{i,s+1}}(x) &= \begin{cases}
		1 - x_{w_{i,s}} &\text{if } x_{w_{i,s}} \in \{0,1\} \text{ and } x_{\NIn(w_{i,s+1}) \setminus w_{i,s}} = (2,\ldots,2),\\
		x_{w_{i,s}} &\text{otherwise}
	\end{cases}\\
	& 0 \le s \le p-1, 1 \le i \le \alpha,\\
	f_u(x) &= \begin{cases}
	1 &\text{if } x_{\NIn(u)} = (1,\ldots,1)\\
	0 &\text{otherwise}
	\end{cases}\\
	& \text{if } u \in U',
\end{align*}
satisfies $\ima(f^p) = \alpha_p(D)$.
\end{lemma}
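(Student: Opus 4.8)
The plan is to establish three facts: that $f$ really has interaction graph $D$, that $\ima(f^p)\le\alpha_p(D)$, and that $\ima(f^p)\ge\alpha_p(D)$. For the first, note that the precedence property makes the predecessor $w_{i,s}$ of each walk vertex $w_{i,s+1}$ unambiguous, so $f$ is well defined, and every $f_v$ reads only coordinates in $\NIn(v;D)$, giving $\IG(f)\subseteq D$. To see $\IG(f)\supseteq D$ I would exhibit, for each arc $(u,v)$, two inputs differing only at $x_u$ on which $f_v$ differs: for $v\in U'$ evaluate at the all-ones in-neighbourhood and drop $x_u$ to $0$; for a walk vertex $v=w_{i,s+1}$ set the other in-neighbours to $2$ and toggle $x_{w_{i,s}}$ over $\{0,1\}$ when $u=w_{i,s}$ (the flip branch), or set $x_{w_{i,s}}=0$, the others to $2$, and drop $x_u$ below $2$ when $u$ is an other in-neighbour (switching from the flip to the copy branch). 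The upper bound $\ima(f^p)\le\alpha_p(D)$ is then immediate from Lemma~\ref{lem:upper_bound}, since $\IG(f)\subseteq D$.

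For the lower bound I would take $X=\{x:x_{U^0}=(0,\dots,0)\}$, so that the coordinates $a_i:=x_{w_{i,0}}$ range freely over $[q]$ and $\log_q|X|=|W^0|=\alpha:=\alpha_p(D)$. Because the endpoints $w_{1,p},\dots,w_{\alpha,p}$ are distinct, it suffices to prove that the final-frontier map $a\mapsto\big(f^p_{w_{1,p}}(x),\dots,f^p_{w_{\alpha,p}}(x)\big)$ is injective on $[q]^\alpha$, as this forces $|f^p(X)|\ge q^\alpha$. The engine is that, once the values of the other in-neighbours are fixed, the update at a walk vertex acts on the incoming token as either $\mathrm{id}$ or the transposition $(0\,1)$ of $[q]$ --- a bijection fixing $\{2,\dots,q-1\}$ pointwise and stabilising $\{0,1\}$. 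The obstacle is that the flip branch is controlled by the other in-neighbours, whose values depend on the $a_j$; I therefore first prove a provenance lemma for the red value: $f^s_v(x)=2$ if and only if the length-$s$ chain of designated predecessors from $v$ stays among walk vertices and terminates at some $w_{k,0}$ with $a_k=2$. This holds because neither the copy/flip rule nor the $U'$ vertices (which output only $\{0,1\}$ after the first step) ever create a $2$, so a red light can only be a faithfully copied red token.

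Using this I would run an induction on $s$ proving the invariant that, for every $i$, $f^s_{w_{i,s}}(x)=\beta_{i,s}(a_i)$ for some $\beta_{i,s}\in\{\mathrm{id},(0\,1)\}$ determined only by the pattern $\{k\ne i:a_k=2\}$. In the step, a flip at $w_{i,s+1}$ requires the token $\beta_{i,s}(a_i)$ to lie in $\{0,1\}$ (equivalently $a_i\in\{0,1\}$) and every other in-neighbour to be red; by the provenance lemma each such red neighbour forces some $a_k=2$. If one of these indices is $k=i$ the flip would demand $a_i=2$ and $a_i\in\{0,1\}$ at once and so never fires; otherwise the flip is decided purely by $\{a_k=2:k\ne i\}$. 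In both cases the new token is again a bijection of $a_i$ of the required form, closing the induction.

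Finally, injectivity follows by peeling. If $a$ and $a'$ yield the same final frontier $b$, then on every coordinate with $b_i\ge2$ the map $\beta_{i,p}$ fixes $b_i$, forcing $a_i=a'_i=b_i$; in particular $\{k:a_k=2\}=\{k:b_k=2\}=\{k:a'_k=2\}$, so the maps $\beta_{i,p}$ agree for $a$ and $a'$. On the remaining coordinates ($b_i\in\{0,1\}$) the identity $\beta_{i,p}(a_i)=b_i=\beta_{i,p}(a'_i)$ then gives $a_i=a'_i$, whence $a=a'$ and $\ima(f^p)\ge\alpha_p(D)$. I expect the genuinely delicate point to be the provenance lemma together with the entanglement of flips across walks: one must make the predecessor chain precise through shared and repeated vertices and verify that the flip conditions depend only on the ``$a_k=2$'' pattern, so that the red value can never secretly carry information about the very token it would corrupt.
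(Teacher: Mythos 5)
Your proposal is correct, and although it uses the same construction, the same starting set $X=\{x\in[q]^n : x_{U^0}=(0,\dots,0)\}$ and the same upper bound via Lemma \ref{lem:upper_bound}, its injectivity engine differs genuinely from the paper's. The paper argues \emph{pairwise}: its Claim \ref{claim:1} shows that if two states differ on the frontier $W^s$ and take values in $\{0,1\}$ off it, then their images still differ on $W^{s+1}$ and remain in $\{0,1\}$ off it --- the only invariant is that the value $2$ never appears off the current frontier, so the flip condition at $w_{i,s+1}$ is forced to be the same for both states; Claim \ref{claim:2} then propagates $|f^s_{W^s}(X)|=|X|$ by induction. You instead derive a \emph{closed form} for the dynamics on $X$: a provenance lemma locating all occurrences of $2$ (they can only be copied along designated-predecessor chains from initial tokens $a_k=2$), and the invariant $f^s_{w_{i,s}}(x)=\beta_{i,s}(a_i)$ with $\beta_{i,s}\in\{\mathrm{id},(0\,1)\}$ a function of the pattern $\{k\ne i: a_k=2\}$ alone, followed by a global peeling argument (coordinates with $b_i\ge 2$ are recovered directly, this pins down the $2$-pattern, hence the $\beta_{i,p}$, and the $\{0,1\}$-coordinates follow by bijectivity). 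Your handling of the self-referential case $k=i$ --- a red light whose provenance is $w_{i,0}$ itself can only fire when the token is $2$, where the flip acts trivially --- is exactly the right resolution of the danger you flag, and it is the step with no counterpart in the paper, where the pairwise formulation dissolves it automatically. What each route buys: the paper's argument is shorter and never needs to know \emph{where} the $2$'s are, only that they stay on the frontier; your version yields strictly more information (an explicit description of $f^s$ on $X$, with the flip schedule a function of the $2$-pattern only) and, notably, invokes only predecessor-consistency of the walk family, whereas the final step of the paper's Claim \ref{claim:1} (``$w_{i,t}=w_{j,s}$; but then $v=w_{j,s+1}$'') implicitly also uses successor-consistency --- both hold for Poljak's canonical walks arising from disjoint cycles and paths, which is what the paper's ``by construction'' remark refers to. You also verify $\IG(f)=D$ explicitly with witnesses for every arc (this is where $q\ge 3$ enters), a point the paper leaves to the reader.
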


\begin{proof}
The proof is similar, albeit more complex, than the one of Lemma \ref{lem:q=2}.

\begin{claim} \label{claim:1}
For all $0 \le s \le p-1$, if $x_{W^s} \ne y_{W^s}$ and $x_{U^s}, y_{U^s} \in \{0,1\}^{|U^s|}$, then $f_{W^{s+1}}(x) \ne f_{W^{s+1}}(y)$ and $f_{U^{s+1}}(x), f_{U^{s+1}}(y) \in \{0,1\}^{|U^{s+1}|}$. 
\end{claim}

\begin{proof}[Proof of Claim \ref{claim:1}]
We prove the first assertion. First, suppose there exists $w_{i,s} \in W^s$ where $x_{w_{i,s}} \ge 2$ and $x_{w_{i,s}} \ne y_{w_{i,s}}$, then 
$$
	f_{w_{i,s+1}}(x) = x_{w_{i,s}} \ne f_{w_{i,s+1}}(y).
$$
Second, suppose that for any $w_{i,s} \in W^s$ such that $x_{w_{i,s}} \ne y_{w_{i,s}}$, we have $\{x_{w_{i,s}}, y_{w_{i,s}}\} = \{0,1\}$. Then
\begin{align*}
	f_{w_{i,s+1}}(x) = 1 - x_{w_{i,s}} &\Leftrightarrow x_{\NIn(w_{i,s+1}) \setminus w_{i,s}} = (2,\ldots,2)\\
	&\Leftrightarrow (\NIn(w_{i,s+1}) \subseteq W^s) \land (y_{\NIn(w_{i,s+1}) \setminus w_{i,s}} \ne (2,\ldots,2))\\
	&\Leftrightarrow f_{w_{i,s+1}}(y) = 1 - y_{w_{i,s}}.
\end{align*}

For the second assertion, let $v \in U^{s+1}$, then either $v \in U'$ or $v = w_{i,t+1}$ with $0 \le t \ne s$. If $v \in U'$, then $f_v(x) \in \{0,1\}$ for any $x$. Suppose that $v = w_{i,t+1}$ such that $f_{w_{i,t+1}} \notin \{0,1\}$. Then $x_{w_{i,t}} \notin \{0,1\}$, which implies $w_{i,t} \in W^s$, say $w_{i,t} = w_{j,s}$; but then, $v = w_{j,s+1} \notin U^{s+1}$.
\end{proof}

Let $X = \{x \in [q]^n: x_{U^0} = (0,\ldots,0)\}$; we then have $\log_q |X| = |W| = \alpha_p(D)$.

\begin{claim}\label{claim:2}
For all $0 \le s \le p$, $|f^s_{W^s}(X)| = |X|$ and for any $x \in X$, $f^s_{U^s}(x) \in \{0,1\}^{|U^s|}$.
\end{claim}

\begin{proof}[Proof of Claim \ref{claim:2}]
The proof is by induction on $s$; the statement is clear for $s=0$. Suppose it holds for up to $s$. For any distinct $x,y \in X$, we have $f^s_{W^s}(x) \ne f^s_{W^s}(y)$ and $f^s_{U^s}(x), f^s_{U^s}(y) \in \{0,1\}^{|U^s|}$. By Claim \ref{claim:1}, we obtain that $f^{s+1}_{W^{s+1}}(x) \ne f^{s+1}_{W^{s+1}}(y)$ and $f^{s+1}_{U^{s+1}}(x) \in \{0,1\}^{s+1}$.
\end{proof}
\end{proof}

\subsection{Maximum rank in the Boolean case}

We first exhibit a class of digraphs for which the upper bound on the rank is not reached in the Boolean case.

\begin{proposition} \label{prop:not_reaching_the_bound}
Let $D$ be a digraph such that $\alpha_1(D) = n$ and $d_v = 2$ for all vertices $v \in V$. Then $\ima(f^p) < \alpha_p(D)$ for all $f \in \functions[D, 2]$ and all $p \ge 1$.
\end{proposition}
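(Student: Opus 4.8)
The plan is to reduce the statement, for every $p$, to a single structural fact about $f$ itself, and then to settle that fact by a short parity computation over $\GF(2)$.

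First I would simplify the right-hand side. The hypothesis $\alpha_1(D) = n$ means, via the K\"onig--Ore/Edmonds formula quoted above, that $D$ admits a spanning family of vertex-disjoint cycles, i.e. a cycle cover: indeed, $n$ pairwise independent $1$-walks $(w_{i,0}, w_{i,1})$ must have all starting vertices distinct and all ending vertices distinct, so $i \mapsto w_{i,1}$ is a permutation whose functional graph is a union of cycles of $D$ covering $V$. Feeding this cover into Poljak's theorem with no paths gives $\alpha_p(D) \ge \sum_i |C_i| = n$ for every $p$; since independent $p$-walks have distinct starting vertices we also have $\alpha_p(D) \le n$, whence $\alpha_p(D) = n$ for all $p \ge 1$. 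Thus the claim $\ima(f^p) < \alpha_p(D)$ becomes $\ima(f^p) < n$, i.e. $f^p$ is not surjective. As the state space is finite, $f^p$ is a bijection if and only if $f$ is, so it suffices to prove the single statement: \emph{no $f \in \functions[D,2]$ is a bijection.}

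Next I would exploit $d_v = 2$ together with $\IG(f) = D$: each local function $f_v$ depends essentially on exactly its two in-neighbours, hence is a genuine two-variable Boolean function. If $f$ were a bijection, then for each $v$ the set $\{x : f(x)_v = 1\}$ would have size $2^{n-1}$, so every $f_v$ would be balanced. A short enumeration of the two-variable Boolean functions shows that the only balanced ones depending on both variables are $\oplus$ and its negation; thus each $f_v$ would be affine, $f_v(x) = x_{a_v} \oplus x_{b_v} \oplus \epsilon_v$ with $\{a_v,b_v\} = \NIn(v)$ and $\epsilon_v \in \{0,1\}$. Consequently $f$ would be the affine map $x \mapsto Ax + \epsilon$ over $\GF(2)$, where the matrix $A$ records in-neighbourhoods and therefore has every row sum equal to $d_v = 2$.

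Finally I would derive the contradiction: since every row sum of $A$ is even, $A \mathbf{1} = \mathbf{0}$ over $\GF(2)$, so $A$ is singular and $x \mapsto Ax + \epsilon$ is not a bijection. This contradicts the bijectivity of $f$, and completes the argument. The step I expect to require the most care is the reduction itself --- recognising that the hypotheses force $\alpha_p(D) = n$ for all $p$ and hence collapse the whole family of inequalities to the non-existence of a Boolean bijection; once that is in place, the balancedness-forces-affinity observation and the even-row-sum parity are routine.
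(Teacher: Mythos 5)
Your proof is correct and follows essentially the same route as the paper's: bijectivity forces every local function $f_v$ to be balanced, the in-degree-two hypothesis then forces $f_v = x_{u_1} + x_{u_2} + c_v$ over $\GF(2)$, and the resulting affine map $x \mapsto Ax + c$ is non-bijective because the even row weights make $A$ singular. The only difference is that you spell out the routine reductions the paper leaves implicit --- that $\alpha_1(D)=n$ yields a cycle cover and hence $\alpha_p(D)=n$ for all $p$, and that it suffices to rule out a bijection --- which is a harmless (indeed welcome) elaboration rather than a different approach.
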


\begin{proof}
Suppose $f \in \functions[D, 2]$ is a permutation of $\{0,1\}^n$, then all the local functions $f_v$ must be balanced, i.e. $|f_v^{-1}(0)| = |f_v^{-1}(1)|$ for all $v \in V$. Because the in-degree of $v$ is equal to two, say $\NIn(v) = \{u_1, u_2\}$, we must have $f_v(x_{u_1}, x_{u_2}) = x_{u_1} + x_{u_2} + c_v$, where $c_v \in \GF(2)$. Therefore, $f(x) = Mx + c$, but since every vertex has even in-degree, the sum of all rows in $M$ (in $\GF(2)$) equals zero and $M$ is singular.
\end{proof}

For instance, if $D$ is the undirected cycle on $n$ vertices, or the directed cycle on $n$ vertices with a loop on each vertex, then for all $p \ge 1$, $\alpha_p(D) = n$  but $\ima(f^p) < n$ for all $f \in \functions[D, 2]$.

It is unknown whether there exist other such examples. On the other hand, we can easily exhibit a class of digraphs which do reach the bound. For instance, let $D = \mathring{K_n}$ be the clique with a loop on each vertex (alternatively, $E = V^2$). Then the following $f \in \functions[\mathring{K_n}, 2]$ is a permutation:
$$
	f_v(x) = \begin{cases}
	x_v &\text{if } x \notin \{ (0,\dots,0), (1,\dots,1) \}\\
	x_v + 1 &\text{otherwise};
	\end{cases} 
$$
indeed $f$ is the transposition of $(0,\dots,0)$ and $(1,\dots,1)$. Less obviously, the clique also admits a permutation of $\{0,1\}^n$.

\begin{proposition} \label{prop:Kn}
For any $n \ne 3$, $\ima[K_n, 2] = n$.
\end{proposition}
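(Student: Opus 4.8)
The plan is to prove the matching bounds $\ima[K_n,2]\le n$ and $\ima[K_n,2]\ge n$. For the upper bound, since $\functions[K_n,2]\subseteq\functions(K_n,2)$, Lemma~\ref{lem:upper_bound} with $p=1$ gives $\ima(f)\le\alpha_1(K_n)$ for every $f\in\functions[K_n,2]$, and $\alpha_1(K_n)=n$ because for $n\ge 2$ the fixed-point-free map $v\mapsto v+1 \bmod n$ supplies $n$ pairwise independent arcs, which is the maximum possible. It therefore remains to construct a single permutation $f$ of $\{0,1\}^n$ with $\IG(f)=K_n$, that is, with each $f_v$ depending essentially on every $x_u$ for $u\ne v$ and on no other variable; such an $f$ automatically satisfies $\ima(f)=n$.

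The natural first candidate is the linear map $f_v(x)=\sum_{u\ne v}x_u$, i.e.\ $f(x)=(J+I)x$ over $\GF(2)$ with $J$ the all-ones matrix, which manifestly has interaction graph $K_n$. Here $(J+I)x=0$ forces $x=\big(\sum_u x_u\big)\mathbf{1}$, so the only candidate kernel elements are the constant vectors, and $\mathbf{1}$ is a solution exactly when $n$ is odd; hence $J+I$ is invertible if and only if $n$ is even. Thus for every even $n\ge 2$ this $f$ is already the desired permutation, which settles that case.

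The difficulty is odd $n$, where $J+I$ is singular with kernel $\{\mathbf{0},\mathbf{1}\}$: the linear map identifies each $x$ with its complement $\bar{x}=x+\mathbf{1}$ and has image the even-weight hyperplane. I would repair this with a single low-degree correction, setting, for odd $n\ge 5$,
$$
	f_n(x)=\sum_{u\ne n}x_u+\psi(x),\qquad f_v(x)=\sum_{u\ne v}x_u\ (v\ne n),\qquad \psi(x)=x_1x_2+x_2x_3+x_1x_3.
$$
The key feature of $\psi$ is that it equals the majority of $x_1,x_2,x_3$ and hence is anti-symmetric, $\psi(\bar{x})=\psi(x)+1$. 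To see $f$ is injective I would argue as follows: if $f(x)=f(y)$ and $\psi(x)=\psi(y)$ then $(J+I)(x+y)=0$, so $y\in\{x,\bar{x}\}$, and $y=\bar{x}$ is excluded since $\psi(\bar{x})\ne\psi(x)$; while if $\psi(x)\ne\psi(y)$ then $(J+I)(x+y)=e_n$, which is impossible because every vector in the image of $J+I$ has even weight whereas $e_n$ has odd weight. Hence $f$ is a permutation.

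Finally I would verify $\IG(f)=K_n$. For $v\ne n$ the map $f_v=\sum_{u\ne v}x_u$ is linear with a nonzero coefficient on each $x_u$, $u\ne v$, and none on $x_v$; for $v=n$ neither $\sum_{u\ne n}x_u$ nor $\psi$ involves $x_n$, while for each $u\le n-1$ the discrete derivative $f_n(x)+f_n(x+e_u)$ takes the value $1$ at $x=\mathbf{0}$, so $f_n$ depends essentially on every $u\ne n$. This gives $\ima(f)=n$, matching the upper bound. The step I expect to demand the most care is precisely the odd case: the correction must simultaneously restore bijectivity and avoid cancelling any of the linear dependencies carried by the $\sum_{u\ne v}x_u$ terms, and a purely quadratic, complement-anti-symmetric $\psi$ is exactly what makes both hold. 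The same weight-parity obstruction is what makes the excluded value $n=3$ genuinely unavoidable, since there Proposition~\ref{prop:not_reaching_the_bound} already forbids any permutation in $\functions[K_3,2]$.
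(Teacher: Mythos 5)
Your proof is correct, but it takes a genuinely different route from the paper's in the odd case. For even $n$ both arguments use the linear map $f(x)=(J+I)x$; the paper proves nonsingularity indirectly, via $\det(J+I)\equiv d(n)\pmod 2$ where $d(n)$ counts derangements, together with a parity argument on the permutation-counting identity, whereas you compute the kernel outright ($(J+I)x=0$ forces $x=s\mathbf{1}$ with $s=\sum_u x_u$, and $\mathbf{1}$ lies in the kernel iff $n$ is odd) -- a shorter and more self-contained verification. For odd $n\ge 5$ the paper proceeds by induction: an explicit piecewise permutation of $[2]^5$ (conditional rotations of three coordinates and a conditional swap-or-flip of two) serves as base case, and each inductive step appends two vertices to a permutation in $\functions[K_n,2]$. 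You instead give a single closed-form construction for all odd $n\ge 5$: the linear map corrected by the self-dual majority polynomial $\psi(x)=x_1x_2+x_2x_3+x_1x_3$ on one coordinate. Your injectivity argument is complete -- $\psi(\bar x)=\psi(x)+1$ kills the kernel element $\mathbf{1}$, and the even-weight image of $J+I$ (it annihilates $\mathbf{1}^T$ when $n$ is odd) rules out $(J+I)(x+y)=e_n$ -- and your check that the quadratic correction neither introduces a dependence of $f_n$ on $x_n$ (since $n\ge 5$) nor cancels any essential dependence (the discrete derivative $1+x_j+x_k$ is nonzero at $\mathbf{0}$) is exactly the care the construction demands. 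Your approach buys uniformity (no base case, no induction) and makes visible why $n=3$ fails within the construction itself ($\psi$ would touch $x_3$, creating a loop), complementing the obstruction from Proposition~\ref{prop:not_reaching_the_bound}; the paper's inductive scheme is more combinatorial and illustrates a reusable extension technique (growing a permutation by two clique vertices), but is longer and case-based. Like the paper's proof, yours silently assumes $n\ge 2$, which is harmless since $n=1$ is degenerate in both treatments.
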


\begin{proof}
Firstly, let $n$ be even. Then we claim that $f(x) = Mx$ is a permutation, or equivalently that $\det(M) = 1$. For $\det(M) = d(n) \mod 2$, where $d(n)$ is the number of derangements (fixed point-free permutations) of $[n]$. Enumerating the permutations of $[n]$ according to their number $p$ of fixed points, we have
$$
	n! = d(n) + \sum_{p=1}^{n-1} \binom{n}{p} d(n-p) + 1.
$$ 
Since $n!$ and $\binom{n}{1}, \dots, \binom{n}{n-1}$ are all even, it follows that $d(n)$ is odd, thus $\det(M) = 1$.

Secondly, let $n \ge 5$ be odd. We prove the result by induction on $n$ odd. Let us settle the case where $n = 5$. We construct $f \in \functions[K_5, 2]$ as follows:
\begin{align*}
	(f_1, f_2, f_3)(x) &= 
	\begin{cases}
		(x_3, x_1, x_2) &\text{if } x_4 = x_5\\
		(x_2, x_3, x_1) &\text{otherwise},
	\end{cases}\\
	(f_4, f_5)(x) &= 
	\begin{cases}
		(x_5, x_4) &\text{if } (x_1, x_2, x_3) = (0,0,0)\\
		(x_5 + 1, x_4 + 1) &\text{otherwise}.
	\end{cases}
\end{align*}
It is easy to check that $f$ is a permutation of $[2]^5$.

The inductive case is similar. Suppose that $g \in \functions[K_n, 2]$ is a permutation, then construct $f \in \functions[K_{n+2}, 2]$ as follows:
\begin{align*}
	(f_1, \dots, f_n)(x) &= 
	\begin{cases}
		g(x_1, \dots, x_n) &\text{if } x_{n+1} = x_{n+2}\\
		g(x_1, \dots, x_n) + (1, \dots, 1) &\text{otherwise},
	\end{cases}\\
	(f_{n+1}, f_{n+2})(x) &= 
	\begin{cases}
		(x_{n+2}, x_{n+1}) &\text{if } (x_1, \dots, x_n) = (0,\dots,0)\\
		(x_{n+2} + 1, x_{n+1} + 1) &\text{otherwise}.
	\end{cases}
\end{align*}
Again, it is easy to check that $f$ is a permutation of $[2]^n$.
\end{proof}

\section{Average rank} \label{sec:average}

\begin{theorem} \label{th:average_rank}
The average scaled rank in $\functions[D, q]$ tends to $\alpha_1(D)$:
$$
	\lim_{q \to \infty} \frac{1}{|F[D, q]|}  \sum_{f \in F[D, q]} \ima(f) = \alpha_1(D).
$$
\end{theorem}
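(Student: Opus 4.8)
The plan is to show the average lies between $\alpha_1(D)-o(1)$ and $\alpha_1(D)$. The upper bound is free: Lemma~\ref{lem:upper_bound} with $p=1$ gives $\ima(f)\le\alpha_1(D)$ for every $f\in\functions(D,q)\supseteq\functions[D,q]$, so the average never exceeds $\alpha_1(D)$. Since $\ima(f)\in[0,\alpha_1(D)]$ is bounded, for the matching lower bound it suffices to prove, for each fixed $\epsilon>0$, that a uniformly random $f\in\functions[D,q]$ satisfies $\ima(f)\ge\alpha_1(D)-\epsilon$ with probability tending to $1$ as $q\to\infty$; boundedness then forces the average to be at least $(\alpha_1(D)-\epsilon)(1-o(1))$, and letting $\epsilon\to0$ finishes the proof.

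I would first establish this high-probability estimate under the product measure on $\functions(D,q)$, where the local maps $f_v$ are independent and uniform, and then transfer it to $\functions[D,q]$. The transfer is cheap: the probability that a uniform $f_v$ fails to depend essentially on a prescribed in-neighbour equals $q^{-q^{d_v-1}(q-1)}$, which vanishes superpolynomially, so a union bound over the arcs gives $\Pr[\IG(f)=D]\to1$. Hence any event that is rare under $\functions(D,q)$ is also rare under the uniform measure on $\functions[D,q]$.

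The heart of the argument is a second-moment estimate on a cleverly projected map. Fix $\alpha:=\alpha_1(D)$ pairwise independent $1$-walks $W_i=(w_{i,0},w_{i,1})$, so that the tails $W^0$ and heads $W^1$ are each $\alpha$ distinct vertices, restrict the inputs to $X=\{x: x_v=0 \text{ for } v\notin W^0\}\cong[q]^{W^0}$, and consider $G:[q]^{W^0}\to[q]^{W^1}$ with $G(z)_i=f_{w_{i,1}}(x)$. Writing $T_i=\{j: w_{j,0}\in\NIn(w_{i,1})\}$, each $G_i$ depends only on the coordinates $z_{T_i}$, is a uniform random function of them, and the $G_i$ are independent, being determined by the independent local maps at the distinct vertices $w_{i,1}$. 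Computing expected collisions, for $z\neq z'$ differing exactly on $A\subseteq[\alpha]$ one gets $\Pr[G(z)=G(z')]=q^{-e(A)}$ with $e(A)=|\{i:T_i\cap A\neq\emptyset\}|$, whence
$$
\mathbb{E}\!\left[\sum_{v\in[q]^{W^1}}|G^{-1}(v)|^2\right]=q^\alpha\sum_{A\subseteq[\alpha]}\Big(1-\tfrac1q\Big)^{|A|}q^{|A|-e(A)}.
$$
The main obstacle is precisely that the supports $T_i$ overlap --- the tail of one walk may feed several heads --- so the coordinates of $G$ are not independent functions of disjoint variable blocks and a crude union bound would be far too lossy. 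What saves the estimate is the structural fact $i\in T_i$ (each head sees its own tail), which yields $e(A)\ge|A|$; the displayed sum is then at most $q^\alpha(2-\tfrac1q)^\alpha\le 2^\alpha q^\alpha$.

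Converting this into a rank bound is routine. By Markov's inequality $\Pr[\sum_v|G^{-1}(v)|^2\ge q^{\alpha+\epsilon}]\le 2^\alpha q^{-\epsilon}\to0$, and off this event Cauchy--Schwarz gives $|\Ima(G)|\ge q^{2\alpha}/\!\sum_v|G^{-1}(v)|^2>q^{\alpha-\epsilon}$. Since $|\Ima(f)|\ge|\Ima(G)|$, this delivers $\ima(f)\ge\alpha-\epsilon$ with probability $\to1$ under $\functions(D,q)$, hence under $\functions[D,q]$ by the transfer step. Note that because we only ever need the image to within the factor $q^\epsilon$, a first-moment (Markov) bound suffices and no sharp concentration is required --- this is what makes the relatively crude second-moment estimate adequate.
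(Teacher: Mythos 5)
Your proof is correct, but its engine is genuinely different from the paper's. Both arguments share the same skeleton --- the upper bound is Lemma \ref{lem:upper_bound} with $p=1$, and the lower bound is obtained by fixing $\alpha_1(D)$ pairwise independent arcs and showing that a random $f$ has large image with high probability --- but the paper proves the high-probability statement by a sequential hybrid argument: it reveals the local functions $f_{v_1},\dots,f_{v_a}$ one at a time, maintains the invariant $|\Ima(h^i)|\ge c_i q^a$ with constants $c_i=c_{i-1}^2/8$ independent of $q$, and at each step applies a binomial tail bound (failure probability exponentially small in $q$) to the image of a uniform random function on a fibre of size at least $\frac{1}{2}c_i q$. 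You instead make a single global second-moment computation: the collision identity $\Pr[G(z)=G(z')]=q^{-e(A)}$ together with the structural inequality $e(A)\ge|A|$ (from $i\in T_i$, i.e.\ each head sees its own tail) gives $\mathbb{E}\bigl[\sum_v|G^{-1}(v)|^2\bigr]\le 2^\alpha q^\alpha$, after which Markov and Cauchy--Schwarz yield $|\Ima(G)|\ge q^{\alpha-\epsilon}$ with probability at least $1-2^\alpha q^{-\epsilon}$. The trade-off: the paper's induction gives the quantitatively stronger conclusion $|\Ima(f)|\ge c_a q^a$ --- a constant fraction of $q^a$, with exponentially small failure probability --- whereas your bound loses a factor $q^\epsilon$ and has only polynomially small failure probability; both are ample for the limit statement, and your route avoids the induction and all concentration estimates. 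Your transfer step is also a point where you are more careful than the paper: the paper samples $f$ uniformly from $\functions[D,q]$ and then asserts that $f_{v_{i+1}}$ restricted to $X$ is uniformly random among all functions, which silently conflates the measure conditioned on $\IG(f)=D$ with the product measure on $\functions(D,q)$; your explicit estimate $\Pr[\IG(f)=D]\ge 1-\sum_{(u,v)\in E}q^{-q^{d_v-1}(q-1)}\to 1$, followed by conditioning, closes that small rigour gap cleanly.
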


\begin{proof}
Let $a := \alpha_1(D)$ and $(u_1, v_1), \dots, (u_a, v_a)$ be a collection of pairwise independent arcs. Let $q$ be large enough and $f$ be chosen uniformly at random amongst $\functions[D, q]$. Let $h^0 = (x_{u_1}, \dots, x_{u_a})$ and for any $1 \le i \le a$, let 
$$
	h^i = (f_{v_1}, \dots, f_{v_i}, x_{u_{i+1}}, \dots, x_{u_a}) : [q]^n \to [q]^a.
$$
Let $c_i$ be defined as $c_0 = 1$ and $c_i = \frac{c_{i-1}^2}{8}$ for $1 \le i \le a$. 

Since $|\Ima(f)| \ge |\Ima(h^a)|$, all we need is to prove the following claim: with high probability, $|\Ima(h^i)| \ge c_i q^a$.

The proof is by induction on $i$. The claim clearly holds for $i=0$; suppose it holds for $i$. Let $g = (f_{v_1}, \dots, f_{v_i}, x_{u_{i+2}}, \dots, x_{u_a}) : [q]^n \to [q]^{a-1}$. Then we have
\begin{align*}
	Z &:= \left\{ z \in \Ima(g) : |(z, x_{i+1}) \in \Ima(h^i)| \ge \frac{1}{2} c_i q \right\},\\
	|Z| &\ge \frac{1}{2} c_i q^{a - 1}.
\end{align*}
For otherwise, we would have
$$
	|\Ima(h^i)| < q |Z| + \left( \frac{1}{2} c_i q \right) q^{a - 1} \le c_i q^a.
$$ 
Now let $N$ be the in-neighbourhood of $v_{i+1}$; note that $u_{i+1} \in N$. Therefore, for each $z \in Z$, there exist at least $\frac{1}{2} c_i q$ values of $x_N$ such that $z = g(x_N)$; denote this set of values as $X$. On $X$, $f_{v_{i+1}}(x_N)$ is chosen uniformly at random. With probability exponentially small, we have $|f_{v_{i+1}}(X)| \le \frac{1}{2} |X|$, since
\begin{align*}
	P \left( |f_{v_{i+1}}(X)| \le \frac{1}{2} |X| \right) 
	& \le \binom{q}{|X|/2} \left( \frac{|X|}{2 q} \right)^{|X|}\\
	& \le \left( \frac{2 e q}{|X|} \right)^{|X|/2} \left( \frac{|X|}{2 q} \right)^{|X|}\\
	& = \left( \frac{e |X|}{2 q} \right)^{|X|/2}\\
	& \le \left( \frac{e c_i}{4} \right)^{c_i q/4}.
\end{align*}
Therefore, with high probability, $|f_{v_{i+1}}(X)| > \frac{1}{2} |X|$ for all $z \in Z$, and hence
$$
	|\Ima(h^i)| \ge |Z| \frac{1}{4} c_i q \ge c_{i+1} q^a.
$$
\end{proof}

We make two remarks about Theorem \ref{th:average_rank}. 

Firstly, the theorem only gives an approximation of the average rank. Obtaining more detailed information seems difficult, because the average rank can vary widely with the digraph $D$. For instance, let us compare the complete graph with $n$ loops $\mathring{K_n}$ to the empty graph with $n$ loops $\mathring{L_n}$; both graphs have $\alpha_1(D) = n$. It is well known that the average rank of a function $[r] \to [r]$ tends to $\epsilon r$, where $\epsilon = 1 - e^{-1}$. Then the average rank in $\functions[\mathring{K_n}, q]$ tends to $\epsilon q^n$, while in $\functions[\mathring{L_n}, q]$ it tends to $\epsilon^n q^n$.

Secondly, there is no analogue of the theorem for the periodic rank. Again, let us use $\mathring{K_n}$. The average periodic rank of a 
function $[r] \to [r]$ tends to $\delta \sqrt{r}$, where $\delta = \sqrt{\pi/2}$. Then, the average rank in $\functions[\mathring{K_n}, q]$ tends to $\delta q^{n/2} = o(q^{\alpha_n(\mathring{K_n})})$.

\section{Maximum (periodic) rank under different update schedules} \label{sec:update}

An update schedule, or simply schedule, corresponds to the way the different entities of the underlying network represented by $f$ update their local state. More formally, a \textbf{schedule} for $f \in \functions[D, q]$ is any $\sigma = (\sigma_1, \dots, \sigma_t)$ where $\sigma_i \subseteq V$. We denote the application of $f$ using the schedule $\sigma$ as $f^\sigma$: for any $S \subseteq V$, we let $f^{(S)}$ where
$$
	f_v^{(S)}  = 
	\begin{cases}
		f_v(x) &\text{if } v \in S,\\
		x_v &\text{otherwise},
	\end{cases}
$$
and
$$
	f^{\sigma} = f^{\sigma_t} \circ \dots \circ f^{\sigma_1}.
$$
We now review three important classes of schedules. 
\begin{enumerate}
	\item $\sigma$ is \textbf{complete} if every entity updates its local state at least once, i.e. if $\bigcup_{i=1}^t \sigma_i = V$. 

	\item $\sigma$ is \textbf{block-sequential} if every entity updates its local state exactly once, i.e. if $\bigcup_{i=1}^t \sigma_i = V$ and $\sigma_i \cap \sigma_j = \emptyset$ for all $i \ne j$.

	\item $\sigma$ is \textbf{parallel} if all entities update their state once and at the same time, i.e. if $\sigma = (V)$. Clearly, $f^{(V)} = f$.
\end{enumerate}

We first prove that the $\alpha_1(D)$ upper bound on the scaled rank remains valid for block-sequential schedules.

\begin{theorem} \label{th:max_rank_block}
If $\sigma$ is a block-sequential schedule and $f \in \functions[D, q]$, then $\ima(f^\sigma) \le \alpha_1(D)$.
\end{theorem}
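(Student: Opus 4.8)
The plan is to reduce the block-sequential case to the parallel case by showing that $f^\sigma$ can itself be realized as (the induced mapping of) an interpretation of a suitable term set whose min-cut is at most $\alpha_1(D)$, and then invoke Theorem~\ref{th:bound_dispersion} exactly as in the proof of Lemma~\ref{lem:upper_bound}. The key observation is that block-sequential updating, unlike the parallel update, does not iterate $f$ in lockstep: when the blocks $\sigma_1,\dots,\sigma_t$ are applied one after the other, each coordinate is rewritten exactly once, and at the moment $v$ is updated (say in block $\sigma_j$) its new value is $f_v$ evaluated at the \emph{current} values of its in-neighbours---values which, for in-neighbours already updated in an earlier block $\sigma_i$ with $i<j$, are themselves already-composed expressions, and for in-neighbours not yet updated are still the original variables $x_u$.

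First I would build, from $\sigma$ and $D$, a term set $\Gamma^\sigma = \{\tau_1,\dots,\tau_n\}$ where $\tau_v$ is the term representing the final value of coordinate $v$ after the whole schedule. Concretely, processing the blocks in order $j=1,\dots,t$, I maintain a current term $c_w$ for each vertex $w$, initialized to $c_w = x_w$; when block $\sigma_j$ fires, I simultaneously replace $c_v$ by $f_v(c_{u_1},\dots,c_{u_k})$ for every $v\in\sigma_j$ (where $\NIn(v)=\{u_1,\dots,u_k\}$), reading the $c_{u_i}$ as they stand just before block $j$. After all blocks, set $\tau_v = c_v$. By construction $f^\sigma$ is the induced mapping of an interpretation of $\Gamma^\sigma$ over $[q]$.

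The crux is then to bound the min-cut of $\Gamma^\sigma$, i.e.\ the max-flow in $G_{\Gamma^\sigma}$ from the source variables $\{x_1,\dots,x_n\}$ to the sinks $\{\tau_1,\dots,\tau_n\}$, by $\alpha_1(D)$. Every vertex-disjoint source-to-sink path in $G_{\Gamma^\sigma}$ descends through the subterm structure, and each elementary step from a subterm $c_{u}$ to an enclosing $f_v(\dots)$ corresponds to an arc $(u,v)\in E(D)$ traversed at some update time. The point is that a flow of value $m$ yields $m$ paths which, read off as their sequences of traversed arcs, project to a family of arcs in $D$ whose tails are all distinct and whose heads are all distinct---in other words, $m$ pairwise independent $1$-walks, giving $m\le\alpha_1(D)$. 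The reason distinctness holds is precisely that the schedule is block-sequential: each vertex is updated exactly once, so each term $c_v=\tau_v$ is produced by a \emph{single} application of $f_v$, and vertex-disjointness of the paths in $G_{\Gamma^\sigma}$ forces the corresponding arcs to have distinct endpoints on each side.

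The main obstacle I anticipate is making the last bijection rigorous: unlike the clean layered graph $G_{\Gamma^p}$ of Lemma~\ref{lem:upper_bound}, where walks live on a product with the time axis, here $G_{\Gamma^\sigma}$ is an irregular DAG and a single source variable $x_u$ may be reused as a subterm in several of the $\tau_v$, so I must argue carefully that vertex-disjointness of the \emph{paths} (not merely arc-disjointness) translates into independence of the resulting $1$-walks, ruling out two paths sharing a tail or sharing a head arc in $D$. I expect to handle this by checking that the first arc of each path has a distinct tail (since the source vertices are distinct) and the last arc has a distinct head (since the sinks $\tau_v$ are distinct and each is an $f_v$-application at a single vertex $v$), which is exactly the independence condition needed to apply Edmonds' formula and conclude $\ima(f^\sigma)\le\alpha_1(D)$.
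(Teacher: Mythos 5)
This is essentially the paper's own proof: it too realises $f^\sigma$ as the induced mapping of a term set, bounds $\ima(f^\sigma)$ by the min-cut via Theorem~\ref{th:bound_dispersion}, and converts a family of vertex-disjoint paths into pairwise independent arcs of $D$, the only cosmetic difference being that the paper keeps the subterm graph layered by padding non-updated coordinates with a unary identity symbol $g$, whereas you substitute terms directly. One small repair to your final step: you cannot certify independence by pairing first-arc tails with last-arc heads (those may be different arcs of the same path); instead take the \emph{first} arc of each path, whose tail is its (distinct) source variable and whose head is its first internal vertex $\tau_v$ --- these heads are automatically distinct because the paths are vertex-disjoint and, the schedule being block-sequential, each vertex is updated exactly once and so carries a unique $f_v$-term --- which is precisely the paper's ``first updated vertex'' extraction.
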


\begin{proof}
We use a proof technique similar to that of Theorem \ref{lem:upper_bound}. Let $\sigma = (\sigma_1, \dots, \sigma_t)$ be a block-sequential schedule. Construct the term set $\Gamma$ built on $x_1, \dots, x_n$ and the $n+1$ function symbols $f_1, \dots, f_n, g$, where $f_i$ is $d_i$-ary and $g$ is unary, uniquely defined as such.
\begin{enumerate}
	\item The subterm graph $G_\Gamma = (V_\Gamma, E_\Gamma)$ is as follows: $V_\Gamma = V^0 \cup \dots \cup V^t$ consists of $t+1$ copies of $V$, and $(u^{i-1}, v^i) \in E_\Gamma$ if either $(u,v) \in E$ and $v \in \sigma_i$ or $u = v$ and $v \notin \sigma_i$. 
	
	\item On $v^i$, $\Gamma$ uses the function symbol $f_v$ if $v^i \in \sigma_i$ and the function symbol $g$ if $v^i \notin \sigma_i$. 
\end{enumerate}
Then it is clear that for any $\bar{f} \in \functions(D, q)$, $\bar{f}^\sigma$ can be viewed as an interpretation of $\Gamma$, where $g$ is interpreted as the identity. Therefore, $\ima(\bar{f}^\sigma)$ is no more than the min-cut of $\Gamma$.

All that is left is to show that $G_\Gamma$ has at most $\alpha_1(D)$ disjoint paths from $V^0$ to $V^t$. Let $P_1, \dots, P_m$ be a family of disjoint paths starting at vertices $1, \dots, m$ and let $v_1, \dots, v_m$ be the ``first updated vertices'' on the respective paths. Formally, let $P_i = (w_0^0, \dots, w^t_t)$, where $w_0 = i$, let $a = \min \{ b : w_b^b \in \sigma_b \}$ (such $a$ always exists) and $v_i = w_a$.

Then for $i \ne j$, we have: $(i, v_i)$ and $(j, v_j)$ are arcs in $D$, $i \ne j$, and $v_i \ne v_j$ (clear if $v_i$ and $v_j$ are in different parts of $\sigma$, otherwise if $v_i, v_j \in \sigma_a$ then because the paths are disjoint we have $v_i^a \ne v_j^a$). In other words, $(1, v_1), \dots, (m, v_m)$ are independent arcs in $D$, thus $m \le \alpha_1(D)$.
\end{proof}

In particular, we can refine Corollary \ref{cor:reversible_parallel} on the presence of reversible dynamics.

\begin{corollary} \label{cor:reversible_block}
For any $q \ge 3$, the following are equivalent:
\begin{enumerate}
	\item $\functions[D,q]$ contains a permutation of $[q]^n$,
	
	\item there exist $f \in \functions[D,q]$ and a block-sequential schedule $\sigma$ such that $f^\sigma$ is a permutation of $[q]^n$,
	
	\item all the vertices of $D$ can be covered by disjoint cycles.
\end{enumerate}
\end{corollary}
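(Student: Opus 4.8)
The plan is to prove the three conditions equivalent by establishing the cycle of implications $(1) \Rightarrow (2) \Rightarrow (3) \Rightarrow (1)$, of which two directions are essentially immediate and only one requires real work. For $(1) \Rightarrow (2)$, I would observe that the parallel schedule $\sigma = (V)$ is a block-sequential schedule and satisfies $f^{(V)} = f$; hence any permutation $f \in \functions[D,q]$ supplied by (1) directly witnesses (2). For $(3) \Rightarrow (1)$, I would simply cite Corollary \ref{cor:reversible_parallel}, which asserts precisely that for $q \ge 3$ the set $\functions[D,q]$ contains a permutation of $[q]^n$ if and only if the vertices of $D$ can be covered by disjoint cycles.

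The only implication demanding genuine argument is $(2) \Rightarrow (3)$. Suppose $f \in \functions[D,q]$ and a block-sequential schedule $\sigma$ are such that $f^\sigma$ is a permutation of $[q]^n$. Then $|\Ima(f^\sigma)| = q^n$, so $\ima(f^\sigma) = n$. Theorem \ref{th:max_rank_block} gives $\ima(f^\sigma) \le \alpha_1(D)$, whence $\alpha_1(D) \ge n$. On the other hand, $\alpha_1(D) \le n$ holds for every digraph, since pairwise independent $1$-walks are arcs with pairwise distinct tails and there are only $n$ vertices. Combining the two bounds yields $\alpha_1(D) = n$. It remains to convert this numerical equality into the combinatorial cycle-cover statement, which is the heart of the argument. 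Applying Poljak's theorem with $p = 1$, I would fix an optimal family of vertex-disjoint cycles $C_1,\dots,C_r$ and paths $P_1,\dots,P_s$ attaining $\sum_{i=1}^r |C_i| + \sum_{j=1}^s (|P_j| - 1) = n$. Writing $c \le n$ for the total number of vertices this family covers, the attained value equals $c - s$, so $c - s = n$ together with $c \le n$ and $s \ge 0$ forces $c = n$ and $s = 0$. Thus the optimal family uses no paths and its cycles cover every vertex, which is exactly condition (3).

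I expect the main (though mild) obstacle to be this final translation from $\alpha_1(D) = n$ to a disjoint cycle cover: it hinges on the observation that in Poljak's formula each path contributes one fewer vertex than it covers, so extremality at the value $n$ is incompatible with the presence of any path. Everything else is either definitional (the parallel schedule is block-sequential) or a direct appeal to the already-established parallel case in Corollary \ref{cor:reversible_parallel} and the block-sequential rank bound of Theorem \ref{th:max_rank_block}.
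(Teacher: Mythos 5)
Your proof is correct and takes essentially the same route as the paper, which states this corollary as an immediate consequence of Theorem \ref{th:max_rank_block} combined with Corollary \ref{cor:reversible_parallel} --- precisely the two ingredients you assemble, with $(1)\Rightarrow(2)$ being definitional since the parallel schedule is block-sequential. Your explicit translation of $\alpha_1(D)=n$ into a disjoint cycle cover via Poljak's formula (extremality forcing $s=0$ and full coverage) is a detail the paper leaves implicit, and you carry it out correctly.
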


\begin{problem}
Is there an analogue of Theorem \ref{th:max_rank_block} for the periodic rank?
\end{problem}

However, the maximum rank when considering any complete schedule is not bounded by $\alpha_1(D)$. In fact, the periodic rank can be much larger, as seen below. Recall that a strong component of a digraph is \textbf{trivial} if it has no cycle, or equivalently if it is a single loopless vertex. Clearly, a vertex $v$ belongs to a cycle of $D$ if and only if $\{v\}$ is not a trivial strong component of $D$. We denote the number of trivial strong components of $D$ as $T(D)$.

\begin{theorem} \label{th:max_periodic_rank_complete}
For all $D$, 
$$
	\sup \{ \per(f^\sigma) : q \ge 2, f \in \functions[D, q], \sigma \text{complete} \}  = n - T(D).
$$
\end{theorem}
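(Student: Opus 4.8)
# Proof Proposal

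The plan is to establish the equality $\sup \{ \per(f^\sigma) : q \ge 2, f \in \functions[D, q], \sigma \text{ complete} \} = n - T(D)$ by proving the two inequalities separately, and the key conceptual insight is that a complete schedule allows a vertex on a cycle to be ``refreshed'' repeatedly within a single application of $f^\sigma$, which circumvents the parallel constraint measured by $\alpha_n(D)$. Recall that $n - T(D)$ counts exactly the vertices that lie on some cycle of $D$, since $T(D)$ counts the loopless singleton (trivial) strong components.

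For the \textbf{upper bound} $\per(f^\sigma) \le n - T(D)$, the idea is that any vertex $v$ forming a trivial strong component can never carry information periodically, regardless of the schedule. More precisely, I would argue that if we order the trivial strong components according to a topological ordering of the condensation of $D$, then the coordinates indexed by trivial strong components are eventually ``forced'' to fixed values on the periodic set: a source that is a trivial strong component updates to a constant (it has no in-neighbours influencing it), and by induction up the topological order, once all earlier trivial-component coordinates are pinned down on $\Per(f^\sigma)$, each subsequent trivial-component coordinate depends only on already-pinned coordinates plus non-trivial-component coordinates, but since it has no self-sustaining cycle, its periodic value is determined. This would force $|\Per(f^\sigma)| \le q^{n - T(D)}$, giving the bound. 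I expect this to require care because a complete schedule may update a vertex several times, so the ``dependency'' argument must track the term structure of $f^\sigma$ rather than $f$ itself.

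For the \textbf{lower bound}, the plan is to \emph{construct} a specific $f$, a complete schedule $\sigma$, and an alphabet size $q$ (here we have the freedom to choose $q$ large, unlike in the parallel theorem) achieving $\per(f^\sigma) = n - T(D)$. The natural construction is to make every vertex on a cycle act as a permutation-carrying register. For each nontrivial strong component, pick a spanning structure of cycles and use a complete schedule that walks around each cycle, copying values so that after one full application of $f^\sigma$ the values on the cycle are cyclically permuted; repeating this copy-along-the-cycle idea, every vertex on a cycle becomes periodic and can independently store any of $q$ values, yielding $q^{n-T(D)}$ periodic points. The trivial-component vertices are set to a constant function (e.g. $f_u(x) = 0$), contributing nothing. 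Because the schedule is complete rather than block-sequential, a single $f^\sigma$ can propagate a value all the way around a cycle in one step, so $f^\sigma$ restricted to the cyclic coordinates is a genuine permutation of $[q]^{n-T(D)}$, making all such states periodic.

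The main obstacle I anticipate is the upper bound, specifically handling the \emph{interaction} between trivial and non-trivial strong components under an arbitrary complete schedule: a trivial-component vertex may be updated multiple times and in an arbitrary order relative to its in-neighbours, so one cannot naively say its value is a fixed function of the non-trivial coordinates. The clean way around this is likely to unfold $f^\sigma$ into its associated term set (as in the proof of Lemma \ref{lem:upper_bound} and Theorem \ref{th:max_rank_block}) and show that, in the subterm graph, any path from a copy of a trivial-component vertex back to itself must pass through a cycle of $D$, which is impossible by definition of triviality; hence the periodic value on trivial coordinates is a function of the remaining coordinates only. Establishing this acyclicity-of-dependence statement rigorously, while accounting for the identity-function ``carry-over'' arcs introduced by vertices not updated in a given round, is where the real work lies.
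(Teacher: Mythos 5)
Your upper bound is essentially sound, though more laborious than necessary: the paper simply observes that every arc $(u,v)$ of $\IG(f^\sigma)$ requires a path from $u$ to $v$ in $D$, so every trivial strong component of $D$ stays trivial in $\IG(f^\sigma)$, and then applies the already-proven parallel bound (Corollary \ref{cor:max_periodic}, via $\per(f^\sigma) \le \alpha_n(\IG(f^\sigma)) \le n - T(D)$) to the single map $f^\sigma$. Your proposed term-set unfolding and topological-order pinning argument aims at the same fact but reproves the periodic-rank machinery by hand; you could replace all of it by one invocation of Corollary \ref{cor:max_periodic}.

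The genuine gap is in your lower bound, and it sits exactly at the point that distinguishes $n - T(D)$ from $\alpha_n(D)$. The vertices lying on cycles can in general \emph{not} be covered by vertex-disjoint cycles (e.g.\ two triangles sharing a vertex: all $5$ vertices are cyclic, but $\alpha_n = 3$), so your ``spanning structure of cycles'' must consist of \emph{overlapping} cycles $C_1, \dots, C_k$. Your copy-along-the-cycle construction then breaks at any shared vertex $v$: the local function $f_v$ is a single fixed function, and it cannot copy from $v$'s predecessor in $C_1$ during one block of the schedule and from its predecessor in $C_2$ during another. (If the cycles were disjoint, pure copying would indeed work, but then you only recover $\alpha_n(D)$, which the parallel schedule already achieves.) This is why the paper's proof resorts to a linear construction: it takes $q-1 = 2^m$, sets $f$ to act like $g(x) = Ax$ over $\GF(q-1)$ with arc coefficients $a_{u,v} = \alpha^{2^i}$, and proves that for \emph{each} cycle $C_j$ the principal submatrix $A_j$ is nonsingular, because $\det(A_j)$ expands as a sum of distinct powers $\alpha^{S(a)}$ with distinct subset-sum exponents $S(a) = \sum_{e_i \in K_a} 2^i$, which are $\GF(2)$-linearly independent and hence cannot cancel. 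Each block update $h^{(C_j)}$ is then block-triangular with invertible block $A_j$, so the composition along $\sigma = (W, C_1, \dots, C_k)$ is a permutation of $\GF(q-1)^{n-T(D)}$. Nothing in your proposal supplies this invertibility at overlap vertices.

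Two further, related errors. First, setting $f_u(x) = 0$ on trivial-component vertices puts $f$ outside $\functions[D,q]$ whenever such a vertex has in-neighbours, since $\IG(f)$ must \emph{equal} $D$; the paper fixes this with the extra ``flag'' symbol $q-1$ ($f_w(x) = 0$ if all in-neighbour coordinates lie in $\GF(q-1)$ and $q-1$ otherwise), which makes $f_w$ depend essentially on all its in-neighbours yet be constant on the invariant set $X = \{x \in \GF(q-1)^n : x_W = 0\}$. Second, you claim exact attainment $\per(f^\sigma) = n - T(D)$, but the flag construction only yields $(n - T(D)) \log_q (q-1)$ periodic points in scaled terms, which approaches $n - T(D)$ as $q \to \infty$; this is precisely why the theorem is stated as a supremum over $q$ rather than a maximum, a point your proposal misses.
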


\begin{proof}
Let $f \in \functions[D, q]$ and $\sigma$ be a complete schedule. Then $(u,v)$ is an arc of $\IG(f^\sigma)$ only if there is a path from $u$ to $v$ in $D$. Consequently, if $\{v\}$ is a trivial strong component of $D$, then $\{v\}$ is a trivial strong component of $\IG(f^\sigma)$. By Corollary \ref{cor:max_periodic}, we have $\per(f^\sigma) \le n - T(D)$.

Conversely, let $C_1, \dots, C_k$ be a collection of cycles which cover all vertices belonging to a cycle, $W$ denote the set of remaining vertices and let $\sigma = (W, C_1, \dots, C_k)$. Let $q-1 = 2^m$ be large enough ($m \ge 2^{n^2 + 1}$) and let $\alpha$ be a primitive element of $\GF(q-1)$. Denote the arcs in $D$ as $e_1, \dots, e_l$. Let $A \in \GF(q - 1)^{n \times n}$ such that $a_{u,v} = \alpha^{2^i}$ if $(u,v) = e_i$ and $a_{u,v} = 0$ if $(u,v) \notin E$ and let $g(x) = Ax$. Now $f \in \functions[D, q]$ is given as follows: view $[q] = \GF(q-1) \cup \{q-1\}$ and
\begin{align*}
	f_w(x) &= \begin{cases}
	0 & \text{if } x_u \in \GF(q-1) \text{ for all }  u \in \NIn(w),\\
	q-1 & \text{otherwise},
	\end{cases} \quad \forall w \in W\\
	f_v(x) &= \begin{cases}
	g_v(x) & \text{if } x_u \in \GF(q-1) \text{ for all } u \in \NIn(v),\\
	q-1 & \text{otherwise},
	\end{cases} \quad \forall v \notin W.
\end{align*}

Then $f$ acts like $g$ on the set of states $X = \{x \in \GF(q-1)^n : x_W = (0,\dots,0) \}$; in particular, we have $f(X) \subseteq X$. We can then remove $W$ and consider $h \in \functions[D \setminus W, q-1]$ such that $h_v(x_{V \setminus W}) = g_v(x_{V \setminus W}, 0_W)$ for all $v \notin W$ instead. All we need to prove is that $h^{(C_1, \dots, C_k)}$ is a permutation of $\GF(q-1)^{n-T(D)}$.

Denote the square submatrix of $A$ induced by the vertices of $C_j$ as $A_j$. Then we remark that $\det(A_j) \ne 0$ for any $1 \le j \le k$. Indeed, let $K_1, \dots, K_l$ denote all the hamiltonian cycles in the subgraph induced by the vertices of $C_j$ (and without loss, $K_1 = C_j$). For any $1 \le a \le l$, let $S(a) = \sum_{e_i \in K_l} 2^i$. We note that $S(1), \dots, S(l)$ are all distinct, hence $\alpha^{S(1)}, \dots, \alpha^{S(l)}$ are all linearly independent (when viewed as vectors over $\GF(2)$) and
$$
	\det(A_j) = \sum_{a=1}^l \alpha^{S(a)} \ne 0.
$$
Now $h^{(C_j)}(x) = A'_j x$, where
$$
	A'_j = \left(
	\begin{array}{c|c}
	A_j & B_j\\
	\hline
	0 & I
	\end{array}
	\right),
$$
where $(A_j | B_j)$ are the rows of $A$ corresponding to $C_j$ and $I$ is the identity matrix of order $n -T(D) - |C_j|$. Since $A_j$ is nonsingular, so is $A'_j$. Hence $h^{(C_j)}$ is a permutation of $\GF(q-1)^{n-T(D)}$, and by composition, so is $h^{(C_1, \dots, C_k)}$. 
\end{proof}

If $W$ is empty, then we can simplify the proof of Theorem \ref{th:max_periodic_rank_complete} and work with $\GF(q)^n$ instead of $\GF(q-1)^{n - T(D)}$ (this time $q = 2^p$), hence we obtain a permutation. This yields the following corollary on the presence of reversible dynamics.

\begin{corollary} \label{cor:permutation_complete}
There exist $q$, $\sigma$ and $f \in \functions[D, q]$ such that $f^\sigma$ is a permutation of $[q]^n$ if and only if all the vertices of $D$ belong to a cycle.
\end{corollary}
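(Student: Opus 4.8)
The plan is to read off both implications from Theorem~\ref{th:max_periodic_rank_complete}, using the observation that ``every vertex of $D$ lies on a cycle'' is precisely the statement $T(D)=0$: a singleton $\{v\}$ fails to be a trivial strong component exactly when $v$ lies on a cycle.

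First I would dispatch the forward direction, which is immediate. If $f^\sigma$ is a permutation of $[q]^n$ for some $q$, some $f\in\functions[D,q]$ and some complete $\sigma$, then every state is periodic, so $\per(f^\sigma)=n$. Theorem~\ref{th:max_periodic_rank_complete} bounds the periodic rank of any such $f^\sigma$ by $n-T(D)$, forcing $T(D)=0$; hence every vertex lies on a cycle.

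For the converse I would specialise the construction in the proof of Theorem~\ref{th:max_periodic_rank_complete} to the case $T(D)=0$, i.e.\ $W=\emptyset$. With no vertices off cycles, the auxiliary ``red-light'' value $q-1$ and the passage to the invariant subset $X$ become unnecessary, so I can work directly over a field $\GF(q)$ of characteristic two with $q=2^p$ large enough ($p\ge 2^{n^2+1}$). I would choose cycles $C_1,\dots,C_k$ covering all $n$ vertices, take the complete schedule $\sigma=(C_1,\dots,C_k)$, and define the linear map $f(x)=Ax$ exactly as in the theorem: $a_{u,v}=\alpha^{2^i}$ on the arc $e_i$ and $a_{u,v}=0$ off the arcs, for a primitive $\alpha\in\GF(q)$. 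Since $A$ is supported exactly on the arcs of $D$, we have $f\in\functions[D,q]$.

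The key point is then that each partial update $f^{(C_j)}$ is linear with a block upper-triangular matrix whose diagonal blocks are the principal submatrix $A_j$ of $A$ on the vertices of $C_j$ and an identity block; its determinant is therefore $\det(A_j)$, which the determinant computation already carried out in Theorem~\ref{th:max_periodic_rank_complete} shows to be nonzero (the relevant polynomial $\sum_a X^{S(a)}$ has distinct exponents $S(a)=\sum_{e_i\in K_a}2^i<2^{n^2+1}\le p$, hence degree below the degree $p$ of the minimal polynomial of $\alpha$). Consequently every $f^{(C_j)}$ is a bijection, and so is the composition $f^\sigma=f^{(C_k)}\circ\dots\circ f^{(C_1)}$, which is then a permutation of the full space $[q]^n$. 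I expect the only genuine work to lie in this converse, and within it in the nonvanishing of $\det(A_j)$; but since that determinant argument is precisely the one performed in the theorem, the corollary reduces to invoking the theorem's construction with $W=\emptyset$ and with $\GF(q)^n$ in place of $\GF(q-1)^{n-T(D)}$.
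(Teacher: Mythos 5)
Your proposal is correct and takes essentially the same route as the paper, which likewise derives the forward implication from the bound $\per(f^\sigma) \le n - T(D)$ of Theorem~\ref{th:max_periodic_rank_complete} and, for the converse, explicitly says to simplify that theorem's construction when $W = \emptyset$ by taking $q = 2^p$ and working over $\GF(q)^n$, so that $f^\sigma$ composed from the nonsingular block-triangular updates $f^{(C_j)}$ is a permutation. Your degree argument for $\det(A_j) \ne 0$ (distinct exponents $S(a) < 2^{n^2+1} \le p$, below the degree of the minimal polynomial of $\alpha$) is just an explicit restatement of the linear-independence argument already carried out in the theorem's proof.
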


The theorem brings the following natural question.

\begin{problem} \label{pb:rank}
Is there an analogue of Theorem \ref{th:max_periodic_rank_complete} for the rank?
\end{problem}

\section{Acknowledgment}
This work was supported by the Engineering and Physical Sciences Research Council [grant number EP/K033956/1].

%\bibliographystyle{amsplain}
%\bibliography{g}
%

\providecommand{\bysame}{\leavevmode\hbox to3em{\hrulefill}\thinspace}
\providecommand{\MR}{\relax\ifhmode\unskip\space\fi MR }
% \MRhref is called by the amsart/book/proc definition of \MR.
\providecommand{\MRhref}[2]{%
  \href{http://www.ams.org/mathscinet-getitem?mr=#1}{#2}
}
\providecommand{\href}[2]{#2}

\end{document}